\theoremstyle{plain}
\newtheorem{theorem}{Theorem}
\newtheorem{corollary}{Corollary}
\newtheorem{lemma}{Lemma}
\numberwithin{equation}{section}
\begin{document}
\title[Bilinear Riesz means on the Heisenberg group]{Bilinear Riesz means on
the Heisenberg group}
\author{Heping Liu}
\address{School of Mathematical Sciences, Peking University, Beijing, 100871,
P. R. China, }
\email{hpliu@math.pku.edu.cn}
\author{Min Wang}
\address{School of Mathematical Sciences, Peking University, Beijing, 100871,
P. R. China, }
\email{wangmin09150102@163.com}
\date{December 10, 2017}
\subjclass[2010]{Primary 43A80; Secondary 22E30, 42B15, 15A15}
\keywords{Heisenberg group, bilinear Riesz means, restriction theorem}

\begin{abstract}
In this article, we investigate the bilinear Riesz means $S^{\alpha }$
associated to the sublaplacian on the Heisenberg group. We prove that the
operator $S^{\alpha }$ is bounded from $L^{p_{1}}\times L^{p_{2}}$ into $
L^{p}$ for $1\leq p_{1}, p_{2}\leq \infty $ and $1/p=1/p_{1}+1/p_{2}$ when $
\alpha $ is large than a suitable smoothness index $\alpha (p_{1},p_{2})$.
There are some essential differences between the Euclidean space and the
Heisenberg group for studying the bilinear Riesz means problem. We make use
of some special techniques to obtain a lower index $\alpha (p_{1},p_{2})$.
\end{abstract}

\maketitle

\allowdisplaybreaks

\section{Introduction}

The bilinear Bochner-Riesz means problem originates from the study of the
summability of the product of two $n$-dimensional Fourier series. This leads
to the study of the $L^{p_{1}}\times L^{p_{2}}\rightarrow L^{p}$ boundedness
of the bilinear Bochner-Riesz multiplier
\begin{equation*}
B^{\alpha}(f,g)(x)=\int \int_{\left\vert \xi \right\vert ^{2}+\left\vert
\eta \right\vert ^{2}\leq 1} (1-\left\vert \xi \right\vert ^{2}-\left\vert
\eta \right\vert ^{2})^{\alpha }\widehat{f}(\xi ) \widehat{g}(\eta )e^{2\pi
ix\cdot (\xi +\eta )}d\xi d\eta .
\end{equation*}
Here $x\in \mathbb{R}^{n}$, $f$, $g$ are functions on $\mathbb{R}^{n}$ and $
\widehat{f}$, $\widehat{g}$ are their Fourier transforms. Bernicot et al.
\cite{Bern} gave a comprehensive study on the $L^{p_{1}}\times
L^{p_{2}}\rightarrow L^{p}$ boundedness of the operator $B^{\alpha}$.
Inspired by their work, we investigate the corresponding problem on the
Heisenberg group.

Strichartz \cite{Strich, Strich2} developed the harmonic analysis on the
Heisenberg group as the spectral theory of the sublaplacian. We can define
the bilinear Riesz means in terms of the spectral decomposition of the
sublaplacian. Let
\begin{equation*}
\mathcal{L}f=\int_{0}^{\infty }\lambda P_{\lambda }f\,d\mu (\lambda ).
\end{equation*}
be the spectral decomposition of the sublaplacian $\mathcal{L}$. The
bilinear Riesz means associated to the sublaplacian $\mathcal{L}$ is defined
by
\begin{equation*}
S^{\alpha }(f,g)=\int_{0}^{\infty }\int_{0}^{\infty }\left( 1- \lambda
_{1}-\lambda _{2}\right) _{+}^{\alpha }P_{\lambda _{1}}fP_{\lambda
_{2}}g\,d\mu (\lambda _{1})d\mu (\lambda _{2}).
\end{equation*}
As same as the Euclidean case, we hope to obtain a lower smoothness index $
\alpha (p_{1},p_{2})$ such that the operator $S^{\alpha }$ is bounded from $
L^{p_{1}}\times L^{p_{2}}$ into $L^{p}$ for $1\leq p_{1}, p_{2} \leq \infty $
and $1/p=1/p_{1}+1/p_{2}$ when $\alpha > \alpha (p_{1},p_{2})$.

There are some essential differences between the Euclidean space and the
Heisenberg group for studying the bilinear Riesz means problem. Firstly, the
kernel of the bilinear Bochner-Riesz operator $B^{\alpha}$ on $\mathbb{R}
^{n} $ coincides with the kernel of the Bochner-Riesz operator on $\mathbb{R}
^{2n} $. The pointwise estimate of this kernel is well known and gives a
basic result: $B^{\alpha}$ is bounded from $L^{p_{1}}\times L^{p_{2}}$ into $
L^{p}$ when $\alpha > n- \frac{1}{2}$, which is optimal in case of $(p_1,
p_2, p)= (1,1, \frac{1}{2})$. It is not the
case for the Heisenberg group because the product of two Heisenberg groups
is not a Heisenberg group. We will give a pointwise estimate for the kernel
of the bilinear Riesz means $S^{\alpha}$, which is similar to the known
estimate for the kernel of the Riesz means on the Heisenberg group but very
worse than the corresponding estimate for bilinear Bochner-Riesz operator $
B^{\alpha}$. Such a pointwise estimate only gives a very rough result for
smoothness index $\alpha (p_{1},p_{2})$. We don't know if there exist a
better pointwise estimate even for the Riesz means on the Heisenberg group
(cf. \cite{Mauc} or \cite{Thang}). So we have to develop a new technique to
obtain a better result about index $\alpha (p_{1},p_{2})$, for example, in
case of $(p_1, p_2, p)= (\infty, \infty, \infty)$. Secondly, on Euclidean
space, the Fourier transform of the product of two functions is the
convolution of Fourier transforms of two functions because the dual of the
Euclidean space is itself. As a consequence, the $L^{2} \times L^{2}
\rightarrow L^{2}$ boundedness holds for suitable bilinear Fourier
multiplier, which play an important role for giving the estimate in case of $
(p_1, p_2, p)= (2, \infty, 2)$. But this convenience false on the Heisenberg
group. Finally, the restriction estimate for the sublaplacian is very
different from that for the Laplacian on the Euclidean space because the
Heisenberg group has the center of dimension one. Fefferman \cite{Feff}
pointed out that the restriction theorem apply to the study of the
boundedness of Bochner-Riesz means. Stein's earlier result in \cite{Stein}
was improved by using the restriction theorem. Mauceri \cite{Mauc}
investigated the $L^{p}$ boundedness of Riesz means on the Heisenberg group.
Mauceri's result corresponds with Stein's earlier result. M\"{u}ller \cite
{Mull} construct a counter-example to show that the usual norm estimate of
restriction operators holds only in the trivial case $p=1$. M\"{u}ller \cite
{Mull2} gave a new proof of Mauceri's result by using a revised restriction
estimate but didn't improve Mauceri's result. As a result of above reasons,
our techniques are very different from that in \cite{Bern}.

This article is organized as follows. In the next section, we state some
basic facts about the Heisenberg group, and summarize our full results in
the main theorem. In Section 3, we give the pointwise estimate for the
kernel of $S^{\alpha }$. We prove the $L^{p_{1}}\times L^{p_{2}} \rightarrow
L^{p}$ boundedness of $S^{\alpha }$ in the case of $1 \leq p_{1}, p_{2} \leq
2$ in Section 4 and for some particular triples of points $(p_{1}, p_{2}, p)$
in Section 5. The boundedness in other cases are derived from the results in
Section 4 and Section 5 by using of bilinear interpolation method. We
outline this argument in Appendix for reader's convenience.

\vskip 0.5 cm

\section{Preliminaries}

First we recall some basic facts about the Heisenberg group. These facts are
familiar and easy to find in many references. Let $\mathbb{H}^{n}$ denote
the Heisenberg group whose underlying manifold is $\mathbb{C}^{n}\times
\mathbb{R}$ and the group law is given by
\begin{equation*}
(z,t)(w,s)=(z+w,t+s+\frac{1}{2} \text{Im}(z\cdot \overline{w})).
\end{equation*}
The Haar measure on $\mathbb{H}^{n}$ coincides with the Lebesgue measure on $
\mathbb{C}^{n}\times\mathbb{R}$. A homogeneous structure on $\mathbb{H}^{n}$
is given by the non-isotropic dilations $\delta_{r}(z,t)=(rz,r^{2}t)$. We
define a homogeneous norm on $\mathbb{H}^{n}$ by
\begin{equation*}
\left\vert x \right\vert = \Big( \frac{1}{16} \left\vert z\right\vert ^{4}+
t^{2} \Big)^{\frac{1}{4}}, \quad x =(z,t)\in \mathbb{H}^{n}.
\end{equation*}
This norm satisfies the triangle inequality and leads to a left-invariant
distance $d(x,y) = \left\vert x^{-1}y \right\vert$. The ball of radius $r$
centered at $x$ is
\begin{equation*}
B(x,r)= \{ y\in \mathbb{H}^{n}: \left\vert x^{-1}y \right\vert <r \}.
\end{equation*}
The Haar measure $dx$ satisfies $d\delta _{r}(x)=r^{Q}dx$ where $Q=2n+2$ is
the homogeneous dimension of $\mathbb{H}^{n}$. If $f$ and $g$ are functions
on $\mathbb{H}^{n}$, their convolution is defined by
\begin{equation*}
(f \ast g)(x)= \int_{\mathbb{H}^{n}} f\left( xy^{-1}\right)g(y)\, dy,\quad
x,y \in \mathbb{H}^{n}.
\end{equation*}
For each $\lambda \in \mathbb{R}^{\ast}$ and $f\in \mathscr{S}(\mathbb{H}^{n})$, the
inverse Fourier transform of $f$ in variable $t$ is defined by
\begin{equation*}
f^{\lambda }(z)=\int_{-\infty }^{\infty }e^{i\lambda t}f(z,t)\, dt.
\end{equation*}
An easy calculation shows that
\begin{equation*}
(f\ast g)^{\lambda }(z)=\int_{\mathbb{C}^{n}}f^{\lambda }(z-\omega
)g^{\lambda } (\omega )e^{\frac{i}{2}\lambda \mathrm{{Im}(z\cdot \overline{
\omega })}}\, d\omega ,\quad z,\omega \in \mathbb{C}^{n}.
\end{equation*}
Thus, we are led to the convolution of the form
\begin{equation*}
f\ast _{\lambda }g=\int_{\mathbb{C}^{n}}f(z-\omega )g(\omega ) e^{\frac{i}{2}
\lambda \mathrm{{Im}(z\cdot \overline{\omega })}}\, d\omega ,
\end{equation*}
which are called the $\lambda $-twisted convolution.

The sublaplacian $\mathcal{L}$ is defined by
\begin{equation*}
\mathcal{L}= -\sum_{j=1}^{n}(X_{j}^{2}+Y_{j}^{2})
\end{equation*}
where
\begin{eqnarray*}
X_{j} &=&\frac{\partial }{\partial x_{j}}+\frac{1}{2}y_{j}\frac{\partial }{
\partial t}, \quad j=1,2,\cdots ,n, \\
Y_{j} &=&\frac{\partial }{\partial y_{j}}-\frac{1}{2}x_{j}\frac{\partial }{
\partial t}, \quad j=1,2,\cdots ,n,
\end{eqnarray*}
are left invariant vector fields on $\mathbb{H}^{n}$. Up to a constant
multiple, $\mathcal{L}$ is the unique left invariant, rotation invariant
differential operator that is homogeneous of degree two. Therefore, it is
regarded as the counterpart of the Laplacian on $\mathbb{R}^{n}$. The
sublaplacian $\mathcal{L}$ is a positive and essentially self-adjoint
operator. In the following, we state the spectral decomposition of $\mathcal{L}$ (cf. \cite{Thang}).

Let $\varphi _{k}$ be the Laguerre functions on $\mathbb{C}^{n}$ given by
\begin{equation*}
\varphi _{k}(z)=L_{k}^{n-1}\big(\frac{1}{2}\left\vert z\right\vert ^{2}\big)
e^{-\frac{1}{4}\left\vert z\right\vert ^{2}},
\end{equation*}
where $L_{k}^{n-1}$ are the Laguerre polynomials of type $n-1$ defined on $
\mathbb{R}$ by
\begin{equation*}
L_{k}^{n-1}(t)e^{-t}t^{n-1}=\frac{1}{k!}\left( \frac{d}{dt}\right)
^{k}(e^{-t}t^{k+n-1}).
\end{equation*}
Define functions
\begin{equation*}
e_{k}^{\lambda }(z,t)=e^{-i\lambda t}\varphi _{k}^{\lambda }(z)=e^{-i\lambda
t}\varphi _{k}(\sqrt{\left\vert \lambda \right\vert }z),\quad \lambda \in
\mathbb{R}^{\ast }.
\end{equation*}
For $f\in L^{2}(\mathbb{H}^{n})$, we have the expansion
\begin{equation}
f(z,t)=\sum_{k=0}^{\infty }\int_{-\infty }^{\infty }f\ast e_{k}^{\lambda
}(z,t)\,d\mu (\lambda )  \label{expansion}
\end{equation}
where $d\mu (\lambda )=(2\pi )^{-n-1}\left\vert \lambda \right\vert
^{n}d\lambda $ is the Plancherel measure for $\mathbb{H}^{n}$. Each $f\ast
e_{k}^{\lambda }$ is the eigenfunction of $\mathcal{L}$ with eigenvalue $
(2k+n)\left\vert \lambda \right\vert $. We also have the Plancherel formula
\begin{equation}
\left\Vert f\right\Vert _{2}^{2}=(2\pi )^{-2n-1}\sum_{k=0}^{\infty
}\int_{-\infty }^{\infty }\int_{\mathbb{C}^{n}}\left\vert f^{\lambda }\ast
_{\lambda }\varphi _{k}^{\lambda }(z)\right\vert ^{2}\lambda
^{2n}\,dzd\lambda .  \label{Plancherel}
\end{equation}
Defining
\begin{equation*}
\widetilde{e}_{k}^{\lambda }(z,t)=e_{k}^{\frac{\lambda }{2k+n}}(z,t),
\end{equation*}
we can rewrite the decomposition (\ref{expansion}) as
\begin{equation*}
f(z,t)=\int_{-\infty }^{\infty }\sum_{k=0}^{\infty }(2k+n)^{-n-1}f\ast
\widetilde{e}_{k}^{\lambda }(z,t)\,d\mu (\lambda ).
\end{equation*}
Let
\begin{equation*}
P_{\lambda }f(z,t)=\sum_{k=0}^{\infty }(2k+n)^{-n-1}f\ast (\widetilde{e}
_{k}^{\lambda }+\widetilde{e}_{k}^{-\lambda })(z,t).
\end{equation*}
Then (\ref{expansion}) can be written as
\begin{equation*}
f(z,t)=\int_{0}^{\infty }P_{\lambda }f(z,t)\,d\mu (\lambda ).
\end{equation*}
It is clear that $P_{\lambda }f$ is an eigenfunction of the $\mathcal{L}$
with eigenvalue $\lambda $ and we have the spectral decomposition
\begin{equation*}
\mathcal{L}f=\int_{0}^{\infty }\lambda P_{\lambda }f\,d\mu (\lambda ).
\end{equation*}

Now we define the bilinear Riesz means associated to the sublaplacian $
\mathcal{L}$ for $f,g\in \mathscr{S}(\mathbb{H}^{n})$ by
\begin{equation*}
S_{R}^{\alpha }(f,g)=\int_{0}^{\infty }\int_{0}^{\infty }\left( 1-\frac{
\lambda _{1}+\lambda _{2}}{R}\right) _{+}^{\alpha }P_{\lambda
_{1}}fP_{\lambda _{2}}g\,d\mu (\lambda _{1})d\mu (\lambda _{2}).
\end{equation*}
It is easy to see that
\begin{equation*}
S_{R}^{\alpha }(f,g)(x)=\int_{\mathbb{H}^{n}}\int_{\mathbb{H}^{n}}f(x\omega
_{1}^{-1})g(x\omega _{2}^{-1})S_{R}^{\alpha }(\omega _{1},\omega
_{2})\,d\omega _{1}d\omega _{2},
\end{equation*}
where the kernel is given by
\begin{eqnarray*}
&&S_{R}^{\alpha }\big((z_{1},t_{1}),(z_{2},t_{2})\big)=\sum_{k=0}^{\infty
}\sum_{l=0}^{\infty }\int_{-\infty }^{\infty }\int_{-\infty }^{\infty
}\left( 1-\frac{(2k+n)\left\vert \lambda _{1}\right\vert +(2l+n)\left\vert
\lambda _{2}\right\vert }{R}\right) _{+}^{\alpha } \\
&&\qquad \qquad \qquad \qquad \qquad \qquad \qquad \qquad \qquad \times
e_{k}^{\lambda _{1}}(z_{1},t_{1})e_{l}^{\lambda _{2}}(z_{2},t_{2})\,d\mu
(\lambda _{1})d\mu (\lambda _{2}).
\end{eqnarray*}
Because
\begin{equation*}
S_{R}^{\alpha }\big((z_{1},t_{1}),(z_{2},t_{2})\big)=R^{Q}S_{1}^{\alpha }
\big((\sqrt{R}z_{1},Rt_{1}),(\sqrt{R}z_{2},Rt_{2})\big).
\end{equation*}
By a dilation argument, the $L^{p_{1}}\times L^{p_{2}}\rightarrow L^{p}$
boundedness of $S_{R}^{\alpha }$ is deduced from the $L^{p_{1}}\times
L^{p_{2}}\rightarrow L^{p}$ boundedness of $S_{1}^{\alpha }$ as $
1/p=1/p_{1}+1/p_{2}$. We will concentrate on the operator $S_{1}^{\alpha }$
and write $S^{\alpha }$ instead of $S_{1}^{\alpha }$.

Our full results are summarized in the following theorem.

\textbf{Main Theorem}. \textit{Let $1\leq p_{1},p_{2}\leq \infty $ and $
1/p=1/p_{1}+1/p_{2}$. }

\textit{(1) (region I) For $2\leq p_{1},p_{2}\leq \infty $ and $p\geq 2$, if
$\alpha >Q\left( 1-\frac{1}{p}\right) - \frac{1}{2}$, then $S^{\alpha }$ is
bounded from $L^{p_{1}}(\mathbb{H}^{n})\times L^{p_{2}} (\mathbb{H}^{n})$ to
$L^{p}(\mathbb{H}^{n})$. }

\textit{(2) (region II) For $2\leq p_{1},p_{2}\leq \infty $ and $1\leq p\leq
2$, if $\alpha >(Q-1)\left( 1-\frac{1}{p}\right) $, then $S^{\alpha }$ is
bounded from $L^{p_{1}}(\mathbb{H}^{n})\times L^{p_{2}}(\mathbb{H}^{n})$ to $
L^{p}(\mathbb{H}^{n})$. }

\textit{(3) (region III) For $1\leq p_{1}\leq 2\leq p_{2}\leq \infty $ and $
p\geq 1$, if $\alpha >Q\left( \frac{1}{2}-\frac{1}{p_{2}}\right) - \left( 1-
\frac{1}{p}\right)$, then $S^{\alpha }$ is bounded from $L^{p_{1}}(\mathbb{H}
^{n})\times L^{p_{2}}(\mathbb{H}^{n})$ to $L^{p}(\mathbb{H}^{n})$; For $
1\leq p_{2}\leq 2\leq p_{1}\leq \infty $ and $p\geq 1$, if $\alpha >Q\left(
\frac{1}{2}-\frac{1}{p_{1}}\right) - \left( 1-\frac{1}{p}\right)$, then $
S^{\alpha }$ is bounded from $L^{p_{1}} (\mathbb{H}^{n})\times L^{p_{2}}(
\mathbb{H}^{n})$ to $L^{p}(\mathbb{H}^{n})$. }

\textit{(4) (region IV) For $1\leq p_{1}\leq 2\leq p_{2}\leq \infty $ and $
p\leq 1$ , if $\alpha >Q\left( \frac{1}{p_{1}}-\frac{1}{2}\right) $, then $
S^{\alpha } $ is bounded from $L^{p_{1}}(\mathbb{H}^{n})\times L^{p_{2}}(
\mathbb{H} ^{n})$ to $L^{p}(\mathbb{H}^{n})$; For $1\leq p_{2}\leq 2\leq
p_{1}\leq \infty $ and $p\leq 1$, if $\alpha >Q\left( \frac{1}{p_{2}}-\frac{1
}{2} \right) $, then $S^{\alpha }$ is bounded from $L^{p_{1}}(\mathbb{H}
^{n})\times L^{p_{2}}(\mathbb{H}^{n})$ to $L^{p}(\mathbb{H}^{n})$. }

\textit{(5) (region V) For $1\leq p_{1},$ $p_{2}\leq 2$, if $\alpha >Q\left(
\frac{1 }{p}-1\right) $, then $S^{\alpha }$ is bounded from $L^{p_{1}}(
\mathbb{H} ^{n})\times L^{p_{2}}(\mathbb{H}^{n})$ to $L^{p}(\mathbb{H}^{n}).$
}

\includegraphics[scale=0.9]{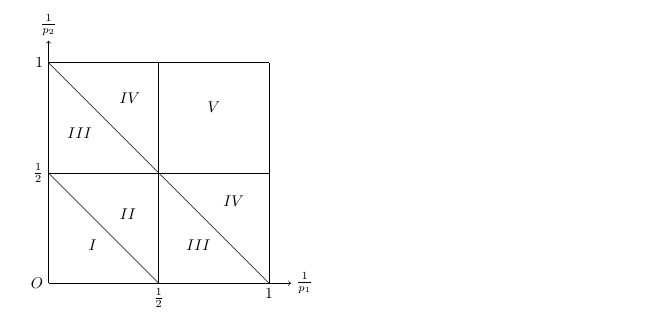}

\section{Pointwise estimate for the kernel}

Note that $S^{\alpha }\big((z_{1},t_{1}),(z_{2},t_{2})\big)$ is bi-radial
with respect to $z_{1}$ and $z_{2}$, which means $S^{\alpha }\big(
(z_{1},t_{1}),(z_{2},t_{2})\big)$ depends only on $\left\vert
z_{1}\right\vert ,\left\vert z_{2}\right\vert ,t_{1},t_{2}$. If $F\big(
(z_{1},t_{1}),(z_{2},t_{2})\big)$ is bi-radial with respect to $z_{1}$ and $
z_{2}$, we also write it as $F\big((r_{1},t_{1}),(r_{2},t_{2})\big)$ for
convenience where $r_{1}=\left\vert z_{1}\right\vert $, $r_{2}=\left\vert
z_{2}\right\vert $.

Suppose $F\big((z_{1},t_{1}),(z_{2},t_{2})\big)$ is bi-radial with respect
to $z_{1}$ and $z_{2}$, we define
\begin{eqnarray}
&&R_{k,l}(\lambda _{1},\lambda_{2},F)=\frac{2^{(1-n)}k!}{(k+n-1)!}\frac{
2^{(1-n)}l!}{(l+n-1)!}  \label{equmain} \\
&&\qquad \qquad \qquad \qquad \times \int_{0}^{\infty} \int_{0}^{\infty}
F^{\lambda_{1},\lambda _{2}}(r_{1},r_{2}) \varphi _{k}^{\lambda
_{1}}(r_{1})\varphi_{l}^{\lambda_{2}}(r_{2})r_{1}^{2n-1}r_{2}^{2n-1}\,
dr_{1}dr_{2},  \notag
\end{eqnarray}
where
\begin{equation*}
F^{\lambda _{1},\lambda _{2}}(r_{1},r_{2})=\int_{-\infty }^{\infty}
\int_{-\infty}^{\infty} e^{i\lambda_{1}t_{1}} e^{i\lambda_{2}t_{2}} F\big(
(r_{1},t_{1}),(r_{2},t_{2})\big)\, dt_{1}dt_{2}.
\end{equation*}
Taking use of the Laguerre transform, we have
\begin{eqnarray*}
F\big((r_{1},t_{1}),(r_{2},t_{2})\big)=\sum_{k=0}^{\infty }
\sum_{l=0}^{\infty}\int_{-\infty }^{\infty } \int_{-\infty}^{\infty
}e^{-i\left( \lambda_{1}t_{1}+\lambda_{2}t_{2}\right)}
R_{k,l}(\lambda_{1},\lambda_{2},F) \varphi_{k}^{\lambda_{1}}(r_{1}) \varphi
_{l}^{\lambda_{2}}(r_{2})\, d\mu (\lambda _{1})d\mu (\lambda _{2}).
\end{eqnarray*}
Especially, if
\begin{equation*}
\int_{-\infty }^{\infty }\int_{-\infty }^{\infty
}\left(\sum_{k=0}^{\infty}\sum_{l=0}^{\infty } \Big\vert R_{k,l}(\lambda
_{1},\lambda_{2},F)\Big\vert \frac{(k+n-1)!}{k!}\frac{(l+n-1)!}{l!}\right)\,
d\mu (\lambda _{1})d\mu (\lambda _{2})< \infty,
\end{equation*}
then $F$ is bounded, because
\begin{equation*}
\left\Vert \varphi _{k}\right\Vert _{\infty }= c_{n}\frac{(k+n-1)!}{k!}.
\end{equation*}

\begin{theorem}
\label{generalth}If $\alpha >4m-1$ where $m$ is a positive integer, then for
any $\omega _{1}=(z_{1},t_{1}), \omega _{2}=(z_{2},t_{2}) \in \mathbb{H}
^{n}. $
\begin{equation*}
\left\vert S^{\alpha }(\omega _{1},\omega _{2})\right\vert \leq
C(1+\left\vert \omega _{1}\right\vert )^{-2m}(1+\left\vert
\omega_{2}\right\vert )^{-2m}.
\end{equation*}
\end{theorem}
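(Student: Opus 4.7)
The plan is to convert powers of the coordinates of $\omega_j=(z_j,t_j)$ acting on the kernel into differential and recurrence operations on the spectral side, and then exploit the smoothness of the multiplier $(1-\lambda_1-\lambda_2)_+^\alpha$ granted by $\alpha>4m-1$. Since $|\omega_j|^4=\tfrac{1}{16}|z_j|^4+t_j^2$, one has $(1+|\omega_j|)^{2m}\lesssim 1+|z_j|^{2m}+|t_j|^m$, so the theorem reduces to proving
\begin{equation*}
|z_1|^{2a_1}|t_1|^{b_1}|z_2|^{2a_2}|t_2|^{b_2}\,|S^\alpha(\omega_1,\omega_2)|\leq C
\end{equation*}
uniformly in $\omega_1,\omega_2$, for all non-negative integers $a_j,b_j$ with $a_j+b_j\leq m$, $j=1,2$.

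Reading the Laguerre-Fourier representation of $S^\alpha$ displayed immediately above the theorem against the expansion formula (\ref{equmain}), the Laguerre coefficients for $F=S^\alpha$ are $R_{k,l}(\lambda_1,\lambda_2,S^\alpha)=c\,(1-(2k+n)|\lambda_1|-(2l+n)|\lambda_2|)_+^\alpha$. The factor $t_j^{b_j}$ is then produced by integrating by parts $b_j$ times in $\lambda_j$, via $t_j\,e^{-i\lambda_j t_j}=i\,\partial_{\lambda_j}e^{-i\lambda_j t_j}$; the derivatives distribute among the symbol, the Plancherel density $|\lambda_j|^n$, and the factor $\varphi_k(\sqrt{|\lambda_j|}z_j)$. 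At most $b_1+b_2\leq 2m<\alpha+1$ derivatives of the symbol are needed, so each integrand remains integrable across the boundary of its support.

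To handle the pure $|z_j|^{2a_j}$ factors, I would invoke the three-term Laguerre recurrence
\begin{equation*}
tL_k^{n-1}(t)=-(k+1)L_{k+1}^{n-1}(t)+(2k+n)L_k^{n-1}(t)-(k+n-1)L_{k-1}^{n-1}(t),
\end{equation*}
applied to $\varphi_k^\lambda(z)=L_k^{n-1}(|\lambda||z|^2/2)e^{-|\lambda||z|^2/4}$: this realises multiplication by $|z|^2$ as $2|\lambda|^{-1}$ times a three-term shift in the index $k$. Iterating $a_j$ times in each variable and combining with the integrations by parts above rewrites $|z_1|^{2a_1}|t_1|^{b_1}|z_2|^{2a_2}|t_2|^{b_2}S^\alpha$ as a finite linear combination of series whose $(k,l)$-th term differs from the original only by bounded shifts of the indices, extra weights $|\lambda_j|^{-a_j}$, and derivatives of the symbol of total order at most $b_1+b_2$.

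Absolute convergence of the resulting double series is the final step. Using $\|\varphi_k\|_\infty\lesssim k^{n-1}$ together with the support constraint $(2k+n)|\lambda_1|+(2l+n)|\lambda_2|\leq 1$, a dilation of each $\lambda_j$ reduces the $\lambda$-integrals to finite beta-type integrals, leaving polynomial weights in $k$ and $l$ whose sums converge exactly when the smoothness budget allows. Tracking the exponents gives the threshold $\alpha>4m-1$. The main obstacle is precisely this bookkeeping: each derivative in $\lambda_j$ that lands on $\varphi_k(\sqrt{|\lambda_j|}z_j)$ reintroduces a factor of $|z_j|^2$, which must be reabsorbed by yet another Laguerre shift, and each such shift consumes more smoothness of the symbol; because this self-referential accounting has to be carried out independently in both variables $\omega_1$ and $\omega_2$, the smoothness threshold doubles from the $2m-1$ familiar from the single-variable Riesz-means kernel estimate to the $4m-1$ stated here.
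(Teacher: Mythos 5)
Your proposal is essentially the paper's argument: both transfer the spatial weights to the Laguerre--Fourier side (powers of $t_j$ via $\partial_{\lambda_j}$, powers of $|z_j|^2$ via Laguerre recurrences), trade each resulting $1/\lambda_j$ against a finite difference of the symbol $(1-\sigma)_+^{\alpha}$ to gain a power of $\lambda_j$ at the cost of one more derivative, and count that $m$ weights in each of the two variables consume at most $4m$ derivatives, whence $\alpha>4m-1$. The only cosmetic difference is that the paper packages the weight as $(it_j-\tfrac14|z_j|^2)^m$, whose modulus is exactly $|\omega_j|^{2m}$, and runs the whole computation through the single identity $\partial_\lambda\varphi_k^\lambda=\tfrac{k}{\lambda}\varphi_k^\lambda-\tfrac{k+n-1}{\lambda}\varphi_{k-1}^\lambda-\tfrac14 r^2\varphi_k^\lambda$ rather than treating the monomials $|z_j|^{2a_j}|t_j|^{b_j}$ separately.
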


\begin{proof}
\quad Set $F\big((z_{1},t_{1}),(z_{2},t_{2})\big)=S^{\alpha }\big(
(z_{1},t_{1}),(z_{2},t_{2})\big)$. If we can show that
\begin{eqnarray}
&&\int_{-\infty }^{\infty }\int_{-\infty }^{\infty }\bigg(\sum_{k=0}^{\infty
}\sum_{l=0}^{\infty }\left\vert R_{k,l}\Big(\lambda _{1},\lambda
_{2},(it_{1}-\frac{1}{4}\left\vert z_{1}\right\vert ^{2})^{m}(it_{2}-\frac{1
}{4}\left\vert z_{2}\right\vert ^{2})^{m}F\Big)\right\vert  \label{equmain2}
\\
&&\qquad \qquad \qquad \qquad \times \frac{(k+n-1)!}{k!}\frac{(l+n-1)!}{l!}
\bigg)\,d\mu (\lambda _{1})d\mu (\lambda _{2})<\infty ,  \notag
\end{eqnarray}
then $(it_{1}-\frac{1}{4}\left\vert z_{1}\right\vert ^{2})^{m}(it_{2}-\frac{
1 }{4}\left\vert z_{2}\right\vert ^{2})^{m}S^{\alpha }\big(
(z_{1},t_{1}),(z_{2},t_{2})\big)$ is bounded. It follows that
\begin{equation*}
\left\vert S^{\alpha }(\omega _{1},\omega _{2})\right\vert \leq
C(1+\left\vert \omega _{1}\right\vert )^{-2m}(1+\left\vert \omega
_{2}\right\vert )^{-2m}
\end{equation*}
and Theorem \ref{generalth} is proved.

It is clear that $R_{k,l}(\lambda _{1},\lambda _{2},F)=\big(
1-(2k+n)\left\vert \lambda _{1}\right\vert -(2l+n)\left\vert \lambda
_{2}\right\vert \big)_{+}^{\alpha }$. We first calculate $R_{k,l}\big(
\lambda _{1},\lambda _{2},\big(it_{1}-\frac{1}{4}\left\vert z_{1}\right\vert
^{2}\big)\big(it_{2}-\frac{1}{4}\left\vert z_{2}\right\vert ^{2}\big)F\big)$
and assume that $\lambda _{1},\lambda _{2}>0$. From (\ref{equmain}), we have
\begin{eqnarray*}
&&R_{k,l}(\lambda _{1},\lambda _{2},it_{1}\cdot it_{2}F)=\frac{2^{(1-n)}k!}{
(k+n-1)!}\frac{2^{(1-n)}l!}{(l+n-1)!} \\
&&\qquad \qquad \qquad \times \int_{0}^{\infty }\int_{0}^{\infty }\left(
it_{1}\cdot it_{2}F\right) ^{\lambda _{1},\lambda _{2}}(r_{1},r_{2})\varphi
_{k}^{\lambda _{1}}(r_{1})\varphi _{l}^{\lambda
_{2}}(r_{2})r_{1}^{2n-1}r_{2}^{2n-1}\,dr_{1}dr_{2}.
\end{eqnarray*}
Since that
\begin{equation*}
\left( it_{1}\cdot it_{2}F\right) ^{\lambda _{1},\lambda _{2}}(r_{1},r_{2})=
\frac{\partial }{\partial \lambda _{1}}\frac{\partial }{\partial \lambda _{2}
}F^{\lambda _{1},\lambda _{2}}(r_{1},r_{2}),
\end{equation*}
we get
\begin{eqnarray*}
&&R_{k,l}(\lambda _{1},\lambda _{2},it_{1}\cdot it_{2}F) \\
&=&\frac{\partial }{\partial \lambda _{1}}\frac{\partial }{\partial \lambda
_{2}}R_{k,l}(\lambda _{1},\lambda _{2},F) \\
&&-\frac{2^{(1-n)}k!}{(k+n-1)!}\frac{2^{(1-n)}l!}{(l+n-1)!}\int_{0}^{\infty
}\int_{0}^{\infty }\frac{\partial }{\partial \lambda _{1}}F^{\lambda
_{1},\lambda _{2}}(r_{1},r_{2})\varphi _{k}^{\lambda _{1}}(r_{1})\left(
\frac{\partial }{\partial \lambda _{2}}\varphi _{l}^{\lambda
_{2}}(r_{2})\right) r_{1}^{2n-1}r_{2}^{2n-1}\,dr_{1}dr_{2} \\
&&-\frac{2^{(1-n)}k!}{(k+n-1)!}\frac{2^{(1-n)}l!}{(l+n-1)!}\int_{0}^{\infty
}\int_{0}^{\infty }\frac{\partial }{\partial \lambda _{2}}F^{\lambda
_{1},\lambda _{2}}(r_{1},r_{2})\left( \frac{\partial }{\partial \lambda _{1}}
\varphi _{k}^{\lambda _{1}}(r_{1})\right) \varphi _{l}^{\lambda
_{2}}(r_{2})r_{1}^{2n-1}r_{2}^{2n-1}\,dr_{1}dr_{2} \\
&&-\frac{2^{(1-n)}k!}{(k+n-1)!}\frac{2^{(1-n)}l!}{(l+n-1)!}\int_{0}^{\infty
}\int_{0}^{\infty }F^{\lambda _{1},\lambda _{2}}(r_{1},r_{2})\left( \frac{
\partial }{\partial \lambda _{1}}\varphi _{k}^{\lambda _{1}}(r_{1})\right)
\left( \frac{\partial }{\partial \lambda _{2}}\varphi _{l}^{\lambda
_{2}}(r_{2})\right) r_{1}^{2n-1}r_{2}^{2n-1}\,dr_{1}dr_{2}.
\end{eqnarray*}
Noticing the fact (see \cite{Thang}, p. 92)
\begin{equation*}
\frac{\partial }{\partial \lambda }\varphi _{k}^{\lambda }(r)=\frac{k}{
\lambda }\varphi _{k}^{\lambda }(r)-\frac{(k+n-1)}{\lambda }\varphi
_{k-1}^{\lambda }(r)-\frac{1}{4}r^{2}\varphi _{k}^{\lambda }(r),
\end{equation*}
it then follows that
\begin{eqnarray*}
&&R_{k,l}(\lambda _{1},\lambda _{2},it_{1}\cdot it_{2}F) \\
&=&\frac{\partial }{\partial \lambda _{1}}\frac{\partial }{\partial \lambda
_{2}}R_{k,l}(\lambda _{1},\lambda _{2},F)-R_{k,l}\left( \lambda _{1},\lambda
_{2},\frac{1}{4}\left\vert z_{1}\right\vert ^{2}\left\vert z_{2}\right\vert
^{2}\right) \\
&&+\frac{l}{\lambda _{2}}\frac{\partial }{\partial \lambda _{1}}\left(
R_{k,l-1}(\lambda _{1},\lambda _{2},F)-R_{k,l}(\lambda _{1},\lambda
_{2},F)\right) \\
&&+\frac{k}{\lambda _{1}}\frac{\partial }{\partial \lambda _{2}}\left(
R_{k-1,l}(\lambda _{1},\lambda _{2},F)-R_{k,l}(\lambda _{1},\lambda
_{2},F)\right) \\
&&+\frac{k}{\lambda _{1}}\frac{l}{\lambda _{2}}\left( R_{k-1,l}(\lambda
_{1},\lambda _{2},F)+R_{k,l-1}(\lambda _{1},\lambda _{2},F)-R_{k,l}(\lambda
_{1},\lambda _{2},F)-R_{k-1,l-1}(\lambda _{1},\lambda _{2},F)\right) \\
&&+\frac{\partial }{\partial \lambda _{1}}R_{k,l}(\lambda _{1},\lambda _{2},
\frac{1}{4}\left\vert z_{2}\right\vert ^{2}F)+\frac{\partial }{\partial
\lambda _{2}}R_{k,l}(\lambda _{1},\lambda _{2},\frac{1}{4}\left\vert
z_{1}\right\vert ^{2}F) \\
&&+\frac{k}{\lambda _{1}}\left( R_{k,l}(\lambda _{1},\lambda _{2},\frac{1}{4}
\left\vert z_{2}\right\vert ^{2}F)-R_{k-1,l}(\lambda _{1},\lambda _{2},\frac{
1}{4}\left\vert z_{2}\right\vert ^{2}F)\right) \\
&&+\frac{l}{\lambda _{2}}\left( R_{k,l}(\lambda _{1},\lambda _{2},\frac{1}{4}
\left\vert z_{1}\right\vert ^{2}F)-R_{k,l-1}(\lambda _{1},\lambda _{2},\frac{
1}{4}\left\vert z_{1}\right\vert ^{2}F)\right) .
\end{eqnarray*}
Similarly, since that
\begin{eqnarray*}
\left( -it_{1}\frac{1}{4}\left\vert z_{2}\right\vert ^{2}F\right) ^{\lambda
_{1},\lambda _{2}}(r_{1},r_{2}) &=&-\frac{\partial }{\partial \lambda _{1}}
\left( \frac{1}{4}\left\vert z_{2}\right\vert ^{2}F\right) ^{\lambda
_{1},\lambda _{2}}(r_{1},r_{2}), \\
\left( -it_{2}\frac{1}{4}\left\vert z_{1}\right\vert ^{2}F\right) ^{\lambda
_{1},\lambda _{2}}(r_{1},r_{2}) &=&-\frac{\partial }{\partial \lambda _{2}}
\left( \frac{1}{4}\left\vert z_{1}\right\vert ^{2}F\right) ^{\lambda
_{1},\lambda _{2}}(r_{1},r_{2}),
\end{eqnarray*}
we have
\begin{eqnarray*}
R_{k,l}(\lambda _{1},\lambda _{2},-it_{1}\frac{1}{4}\left\vert
z_{2}\right\vert ^{2}F) &=&-\frac{\partial }{\partial \lambda _{1}}
R_{k,l}(\lambda _{1},\lambda _{2},\frac{1}{4}\left\vert z_{2}\right\vert
^{2}F)-R_{k,l}\left( \lambda _{1},\lambda _{2},\frac{1}{4}\left\vert
z_{1}\right\vert ^{2}\left\vert z_{2}\right\vert ^{2}\right) \\
&&+\frac{k}{\lambda _{1}}\left( R_{k,l}(\lambda _{1},\lambda _{2},\frac{1}{4}
\left\vert z_{2}\right\vert ^{2}F)-R_{k-1,l}(\lambda _{1},\lambda _{2},\frac{
1}{4}\left\vert z_{2}\right\vert ^{2}F)\right) ,
\end{eqnarray*}
and
\begin{eqnarray*}
R_{k,l}(\lambda _{1},\lambda _{2},-it_{2}\frac{1}{4}\left\vert
z_{1}\right\vert ^{2}F) &=&-\frac{\partial }{\partial \lambda _{2}}
R_{k,l}(\lambda _{1},\lambda _{2},\frac{1}{4}\left\vert z_{1}\right\vert
^{2}F)-R_{k,l}\left( \lambda _{1},\lambda _{2},\frac{1}{4}\left\vert
z_{1}\right\vert ^{2}\left\vert z_{2}\right\vert ^{2}\right) \\
&&+\frac{l}{\lambda _{2}}\left( R_{k,l}(\lambda _{1},\lambda _{2},\frac{1}{4}
\left\vert z_{1}\right\vert ^{2}F)-R_{k,l-1}(\lambda _{1},\lambda _{2},\frac{
1}{4}\left\vert z_{1}\right\vert ^{2}F)\right) .
\end{eqnarray*}
Therefore,
\begin{eqnarray*}
&&R_{k,l}(\lambda _{1},\lambda _{2},\left( it_{1}-\frac{1}{4}\left\vert
z_{1}\right\vert ^{2}\right) \left( it_{2}-\frac{1}{4}\left\vert
z_{2}\right\vert ^{2}\right) F) \\
&=&\frac{\partial }{\partial \lambda _{1}}\frac{\partial }{\partial \lambda
_{2}}R_{k,l}(\lambda _{1},\lambda _{2},F)-2R_{k,l}\left( \lambda
_{1},\lambda _{2},\frac{1}{4}\left\vert z_{1}\right\vert ^{2}\frac{1}{4}
\left\vert z_{2}\right\vert ^{2}\right) \\
&&+\frac{l}{\lambda _{2}}\frac{\partial }{\partial \lambda _{1}}\left(
R_{k,l-1}(\lambda _{1},\lambda _{2},F)-R_{k,l}(\lambda _{1},\lambda
_{2},F)\right) \\
&&+\frac{k}{\lambda _{1}}\frac{\partial }{\partial \lambda _{2}}\left(
R_{k-1,l}(\lambda _{1},\lambda _{2},F)-R_{k,l}(\lambda _{1},\lambda
_{2},F)\right) \\
&&+\frac{k}{\lambda _{1}}\frac{l}{\lambda _{2}}\left( R_{k-1,l}(\lambda
_{1},\lambda _{2},F)+R_{k,l-1}(\lambda _{1},\lambda _{2},F)-R_{k,l}(\lambda
_{1},\lambda _{2},F)-R_{k-1,l-1}(\lambda _{1},\lambda _{2},F)\right) \\
&&+2\frac{k}{\lambda _{1}}\left( R_{k,l}(\lambda _{1},\lambda _{2},\frac{1}{4
}\left\vert z_{2}\right\vert ^{2}F)-R_{k-1,l}(\lambda _{1},\lambda _{2},
\frac{1}{4}\left\vert z_{2}\right\vert ^{2}F)\right) \\
&&+2\frac{l}{\lambda _{2}}\left( R_{k,l}(\lambda _{1},\lambda _{2},\frac{1}{4
}\left\vert z_{1}\right\vert ^{2}F)-R_{k,l-1}(\lambda _{1},\lambda _{2},
\frac{1}{4}\left\vert z_{1}\right\vert ^{2}F)\right) .
\end{eqnarray*}

Let $\sigma =(2k+n)\lambda _{1}+(2l+n)\lambda _{2}$ so that $R_{k,l}(\lambda
_{1},\lambda _{2},F)=\psi (\sigma )$ where
\begin{equation*}
\psi (\sigma )=(1-\sigma )_{+}^{\alpha }.
\end{equation*}
Define
\begin{equation*}
\psi _{1}(\sigma )=\left( it_{1}-\frac{1}{4}\left\vert z_{1}\right\vert
^{2}\right) \left( it_{2}-\frac{1}{4}\left\vert z_{2}\right\vert ^{2}\right)
F.
\end{equation*}
It can be written as
\begin{eqnarray*}
\psi _{1}(\sigma ) &=&\frac{\partial }{\partial \lambda _{1}}\frac{\partial
}{\partial \lambda _{2}}\psi (\sigma )-\frac{l}{\lambda _{2}}\frac{\partial
}{\partial \lambda _{1}}\left( \psi (\sigma )-\psi (\sigma -2\lambda
_{2})\right) -\frac{k}{\lambda _{1}}\frac{\partial }{\partial \lambda _{2}}
\left( \psi (\sigma )-\psi (\sigma -2\lambda _{1})\right) \\
&&-\frac{kl}{\lambda _{1}\lambda _{2}}\left( \psi (\sigma )-\psi (\sigma
-2\lambda _{2})-\psi (\sigma -2\lambda _{1})+\psi (\sigma -2\lambda
_{1}-2\lambda _{2})\right) \\
&&+C_{1}\frac{k}{\lambda _{1}}\left( \psi (\sigma )-\psi (\sigma -2\lambda
_{1}\right) +C_{2}\frac{l}{\lambda _{2}}\left( \psi (\sigma )-\psi (\sigma
-2\lambda _{2})\right) -C_{3}\psi (\sigma ).
\end{eqnarray*}
The function $\psi (\sigma )$ has two properties: the first one is that $
\psi (\sigma )$ is supported in a set of the form $0<(2k+n)\lambda
_{1}+(2l+n)\lambda _{2}\leq c$ for large $k$ and $l$, and the second is
\begin{equation}
\int_{0}^{\infty }\int_{0}^{\infty }\left\vert \psi \left( (2k+n)\lambda
_{1}+(2l+n)\lambda _{2}\right) \right\vert \,d\mu (\lambda _{1})d\mu
(\lambda _{2})\leq C(2k+n)^{-n-1}(2l+n)^{-n-1}  \label{k2}
\end{equation}
We claim that the function $\psi _{1}(\sigma )$ also satisfies the same
conditions. The first property is obvious. To verify that $\psi _{1}(\sigma
) $ satisfies the second property (\ref{k2}), we rewrite $\psi _{1}(\sigma )$
as
\begin{eqnarray*}
\psi _{1}(\sigma ) &=&\frac{nk}{2\lambda _{1}\lambda _{2}}\frac{\partial }{
\partial l}\frac{\partial }{\partial k}\psi ((2k+n)\lambda
_{1}+(2l+n)\lambda _{2}) \\
&&-\frac{nk}{2\lambda _{1}\lambda _{2}}\frac{\partial }{\partial l}\left(
\psi ((2k+n)\lambda _{1}+(2l+n)\lambda _{2})-\psi ((2k-2+n)\lambda
_{1}+(2l+n)\lambda _{2})\right) \\
&&+\frac{nl}{2\lambda _{1}\lambda _{2}}\frac{\partial }{\partial k}\frac{
\partial }{\partial l}\psi ((2k+n)\lambda _{1}+(2l+n)\lambda _{2}) \\
&&-\frac{nl}{2\lambda _{1}\lambda _{2}}\frac{\partial }{\partial k}\left(
\psi ((2k+n)\lambda _{1}+(2l+n)\lambda _{2})-\psi ((2k+n)\lambda
_{1}+(2l-2+n)\lambda _{2})\right) \\
&&-\frac{kl}{\lambda _{1}\lambda _{2}}\frac{\partial }{\partial k}\frac{
\partial }{\partial l}\psi ((2k+n)\lambda _{1}+(2l+n)\lambda _{2}) \\
&&+\frac{kl}{\lambda _{1}\lambda _{2}}\frac{\partial }{\partial k}\left(
\psi ((2k+n)\lambda _{1}+(2l+n)\lambda _{2})-\psi ((2k+n)\lambda
_{1}+(2l-2+n)\lambda _{2})\right) \\
&&+\frac{kl}{\lambda _{1}\lambda _{2}}\frac{\partial }{\partial l}\psi
((2k+n)\lambda _{1}+(2l+n)\lambda _{2}) \\
&&-\frac{kl}{\lambda _{1}\lambda _{2}}\left( \psi ((2k+n)\lambda
_{1}+(2l+n)\lambda _{2})+\psi ((2k+n)\lambda _{1}+(2l-2+n)\lambda
_{2})\right) \\
&&-\frac{kl}{\lambda _{1}\lambda _{2}}\frac{\partial }{\partial l}\psi
((2k-2+n)\lambda _{1}+(2l+n)\lambda _{2}) \\
&&+\frac{kl}{\lambda _{1}\lambda _{2}}\left( \psi ((2k-2+n)\lambda
_{1}+(2l+n)\lambda _{2})-\psi ((2k-2+n)\lambda _{1}+(2l-2+n)\lambda
_{2})\right) \\
&&+2\frac{kl}{\lambda _{1}\lambda _{2}}\frac{\partial }{\partial l}\frac{
\partial }{\partial k}\psi ((2k+n)\lambda _{1}+(2l+n)\lambda _{2}) \\
&&-2\frac{kl}{\lambda _{1}\lambda _{2}}\frac{\partial }{\partial l}\left(
\psi ((2k+n)\lambda _{1}+(2l+n)\lambda _{2})-\psi ((2k-2+n)\lambda
_{1}+(2l+n)\lambda _{2})\right) \\
&&+2\frac{kl}{\lambda _{1}\lambda _{2}}\frac{\partial }{\partial k}\frac{
\partial }{\partial l}\psi ((2k+n)\lambda _{1}+(2l+n)\lambda _{2}) \\
&&-2\frac{kl}{\lambda _{1}\lambda _{2}}\frac{\partial }{\partial k}\left(
\psi ((2k+n)\lambda _{1}+(2l+n)\lambda _{2})-\psi ((2k+n)\lambda
_{1}+(2l-2+n)\lambda _{2})\right) \\
&&-C_{1}\frac{k}{\lambda _{1}}\frac{\partial }{\partial k}\psi
((2k+n)\lambda _{1}+(2l+n)\lambda _{2}) \\
&&+C_{1}\frac{k}{\lambda _{1}}\left( \psi ((2k+n)\lambda _{1}+(2l+n)\lambda
_{2})-\psi ((2k-2+n)\lambda _{1}+(2l+n)\lambda _{2})\right) \\
&&-C_{2}\frac{l}{\lambda _{2}}\frac{\partial }{\partial l}\psi
((2k+n)\lambda _{1}+(2l+n)\lambda _{2}) \\
&&+C_{2}\frac{l}{\lambda _{2}}\left( \psi ((2k+n)\lambda _{1}+(2l+n)\lambda
_{2})-\psi ((2k+n)\lambda _{1}+(2l-2+n)\lambda _{2})\right) \\
&&+\frac{n^{2}-2kl}{4\lambda _{1}\lambda _{2}}\frac{\partial }{\partial k}
\frac{\partial }{\partial l}\psi ((2k+n)\lambda _{1}+(2l+n)\lambda _{2}) \\
&&+C_{1}\frac{k}{\lambda _{1}}\frac{\partial }{\partial k}\psi
((2k+n)\lambda _{1}+(2l+n)\lambda _{2})+C_{2}\frac{l}{\lambda _{2}}\frac{
\partial }{\partial l}\psi ((2k+n)\lambda _{1}+(2l+n)\lambda _{2}) \\
&&-C_{0}\psi ((2k+n)\lambda _{1}+(2l+n)\lambda _{2}).
\end{eqnarray*}
Using Taylor expansion, we have that
\begin{eqnarray*}
\psi _{1}(\sigma ) &=&4nk\lambda _{1}\int_{k-1}^{k}(t+1-k)\psi
^{(3)}((2t+n)\lambda _{1}+(2l+n)\lambda _{2})dt \\
&&+4nl\lambda _{2}\int_{l-1}^{l}(s+1-l)\psi ^{(3)}((2k+n)\lambda
_{1}+(2s+n)\lambda _{2})ds \\
&&-16kl\lambda _{1}\lambda
_{2}\int_{l-1}^{l}\int_{k-1}^{k}(s+1-l)(t+1-k)\psi ^{(4)}((2t+n)\lambda
_{1}+(2s+n)\lambda _{2})dtds \\
&&+16kl\lambda _{1}\int_{k-1}^{k}(t+1-k)\psi ^{(3)}((2t+n)\lambda
_{1}+(2l+n)\lambda _{2}dt \\
&&+16kl\lambda _{2}\int_{l-1}^{l}(s+1-l)\psi ^{(3)}((2k+n)\lambda
_{1}+(2s+n)\lambda _{2})ds \\
&&+C_{1}k\lambda _{1}\int_{k-1}^{k}(t+1-k)\psi ^{(2)}((2t+n)\lambda
_{1}+(2l+n)\lambda _{2})dt \\
&&+C_{2}l\lambda _{2}\int_{l-1}^{l}(s+1-l)\psi ^{(2)}((2k+n)\lambda
_{1}+(2s+n)\lambda _{2})ds \\
&&+(n^{2}-8kl)\psi ^{(2)}((2k+n)\lambda _{1}+(2l+n)\lambda _{2}) \\
&&+C_{1}k\psi ^{(1)}((2k+n)\lambda _{1}+(2l+n)\lambda _{2})+C_{2}l\psi
^{(1)}((2k+n)\lambda _{1}+(2l+n)\lambda _{2}) \\
&&-C_{0}\psi ((2k+n)\lambda _{1}+(2l+n)\lambda _{2}).
\end{eqnarray*}
It follows that
\begin{eqnarray*}
&&\int_{0}^{\infty }\int_{0}^{\infty }\left\vert \psi _{1}(\sigma
)\right\vert \,d\mu (\lambda _{1})d\mu (\lambda _{2}) \\
&\leq &c_{0}\int_{0}^{\infty }\int_{0}^{\infty }\big(1-(2k+n)\lambda
_{1}-(2l+n)\lambda _{2}\big)_{+}^{\alpha }\,d\mu (\lambda _{1})d\mu (\lambda
_{2}) \\
&&+c_{1}\int_{0}^{\infty }\int_{0}^{\infty }\big(1-(2k+n)\lambda
_{1}-(2l+n)\lambda _{2}\big)_{+}^{\alpha -1}\,d\mu (\lambda _{1})d\mu
(\lambda _{2}) \\
&&+c_{2}\int_{0}^{\infty }\int_{0}^{\infty }\big(1-(2k+n)\lambda
_{1}-(2l+n)\lambda _{2}\big)_{+}^{\alpha -2}\,d\mu (\lambda _{1})d\mu
(\lambda _{2}) \\
&&+c_{3}\int_{k-1}^{k}\int_{0}^{\infty }\int_{0}^{\infty }\lambda _{1}\big(
1-(2t+n)\lambda _{1}-(2l+n)\lambda _{2}\big)_{+}^{\alpha -2}\,d\mu (\lambda
_{1})d\mu (\lambda _{2})dt \\
&&+c_{4}\int_{l-1}^{l}\int_{0}^{\infty }\int_{0}^{\infty }\lambda _{2}\big(
1-(2k+n)\lambda _{1}-(2s+n)\lambda _{2}\big)_{+}^{\alpha -2}\,d\mu (\lambda
_{1})d\mu (\lambda _{2})ds \\
&&+c_{5}\int_{k-1}^{k}\int_{0}^{\infty }\int_{0}^{\infty }\lambda _{1}\big(
1-(2t+n)\lambda _{1}-(2l+n)\lambda _{2}\big)_{+}^{\alpha -3}\,d\mu (\lambda
_{1})d\mu (\lambda _{2})dt \\
&&+c_{6}\int_{l-1}^{l}\int_{0}^{\infty }\int_{0}^{\infty }\lambda _{2}\big(
1-(2k+n)\lambda _{1}-(2s+n)\lambda _{2}\big)_{+}^{\alpha -3}\,d\mu (\lambda
_{1})d\mu (\lambda _{2})ds \\
&&+c_{7}\int_{k-1}^{k}\int_{l-1}^{l}\int_{0}^{\infty }\int_{0}^{\infty
}\lambda _{1}\lambda _{2}\big(1-(2t+n)\lambda _{1}-(2s+n)\lambda _{2}\big)
_{+}^{\alpha -4}\,d\mu (\lambda _{1})d\mu (\lambda _{2})dtds \\
&\leq &C(2k+n)^{-n-1}(2l+n)^{-n-1}.
\end{eqnarray*}
This proves that $\psi _{1}(\sigma )$ also has the second property (\ref{k2}). An iteration of the process shows that
\begin{equation*}
R_{k,l}\Big(\lambda _{1},\lambda _{2},(it_{1}-\frac{1}{4}
r_{1}^{2})^{j}(it_{2}-\frac{1}{4}r_{2}^{2})^{j}F\Big)=\psi _{j}(\sigma )
\end{equation*}
satisfies the condition
\begin{equation*}
\int_{0}^{\infty }\int_{0}^{\infty }\Big\vert\psi _{j}((2k+n)\lambda
_{1}+(2l+n)\lambda _{2})\Big\vert\,d\mu (\lambda _{1})d\mu (\lambda
_{2})\leq C_{j}(2k+n)^{-n-1}(2l+n)^{-n-1}
\end{equation*}
provided $(1-\lambda _{1}-\lambda _{2})_{+}^{\alpha -4}$ is integrable.
Thus, when $j=m$ and $\alpha >4m-1$, we have
\begin{eqnarray*}
&&\int_{0}^{\infty }\int_{0}^{\infty }\left\vert R_{k,l}\Big(\lambda
_{1},\lambda _{2},(it_{1}-\frac{1}{4}\left\vert z_{1}\right\vert
^{2})^{m}(it_{2}-\frac{1}{4}\left\vert z_{2}\right\vert ^{2})^{m}F\Big)
\right\vert \,d\mu (\lambda _{1})d\mu (\lambda _{2}) \\
&\leq &C_{m}(2k+n)^{-n-1}(2l+n)^{-n-1},
\end{eqnarray*}
which implies

\begin{eqnarray*}
&&\int_{0}^{\infty }\int_{0}^{\infty }\bigg(\sum_{k=0}^{\infty
}\sum_{l=0}^{\infty }\left\vert R_{k,l}\left( \lambda _{1},\lambda
_{2},(it_{1}-\frac{1}{4}\left\vert z_{1}\right\vert ^{2})^{m}(it_{2}-\frac{1
}{4}\left\vert z_{2}\right\vert ^{2})^{m}F\right) \right\vert \\
&&\qquad \qquad \qquad \qquad \times \frac{(k+n-1)!}{k!}\frac{(l+n-1)!}{l!}
\bigg)\,d\mu (\lambda _{1})d\mu (\lambda _{2})<\infty .
\end{eqnarray*}
We can get the same result by a similar calculation when $(\lambda
_{1},\lambda _{2})$ belongs to other quadrants. Then (\ref{equmain2}) holds
and the proof of Theorem \ref{generalth} is completed.
\end{proof}

As a consequence of Theorem \ref{generalth}, we have

\begin{corollary}
\label{cor}Let $1\leq p_{1},p_{2}\leq \infty $ and $1/p=1/p_{1}+1/p_{2}$. If
$\alpha >2Q+3$, then $S^{\alpha }$ is bounded from $L^{p_{1}}(\mathbb{H}
^{n})\times L^{p_{2}}(\mathbb{H}^{n})$ into $L^{p}(\mathbb{H}^{n})$.
\end{corollary}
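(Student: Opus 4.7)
My plan is to derive the corollary directly from Theorem \ref{generalth} by choosing the integer $m$ as small as possible consistent with the decay kernel being integrable on $\mathbb{H}^n$, and then factoring the bilinear operator into a product of linear convolutions. Since $Q = 2n+2$, the smallest positive integer $m$ for which the function $(1+|\omega|)^{-2m}$ lies in $L^{1}(\mathbb{H}^{n})$ is $m = n+2$; for this value, $4m - 1 = 4n+7 = 2Q+3$. Thus whenever $\alpha > 2Q+3$, the hypothesis $\alpha > 4m-1$ of Theorem \ref{generalth} is satisfied with $m = n+2$, and we obtain
\begin{equation*}
|S^{\alpha}(\omega_{1},\omega_{2})| \leq C(1+|\omega_{1}|)^{-2m}(1+|\omega_{2}|)^{-2m}.
\end{equation*}

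The next step is to feed this pointwise bound into the integral representation
\begin{equation*}
S^{\alpha}(f,g)(x) = \int_{\mathbb{H}^{n}}\int_{\mathbb{H}^{n}} f(x\omega_{1}^{-1})\,g(x\omega_{2}^{-1})\,S^{\alpha}(\omega_{1},\omega_{2})\, d\omega_{1}\,d\omega_{2}
\end{equation*}
recorded in Section~2. The crucial observation is that the dominating kernel factorizes: setting $K(\omega) := (1+|\omega|)^{-2m}$, which lies in $L^{1}(\mathbb{H}^{n})$ since $2m > Q$, the above reduces to the tensor-product bound
\begin{equation*}
|S^{\alpha}(f,g)(x)| \leq C\,(|f| \ast K)(x)\,(|g| \ast K)(x).
\end{equation*}

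To finish, I would apply Young's convolution inequality on $\mathbb{H}^{n}$ separately to each factor, obtaining $\||f| \ast K\|_{p_{1}} \leq \|K\|_{1}\|f\|_{p_{1}}$ and $\||g| \ast K\|_{p_{2}} \leq \|K\|_{1}\|g\|_{p_{2}}$ for every $1 \leq p_{1},p_{2} \leq \infty$. Since $1/p = 1/p_{1} + 1/p_{2}$, the generalized H\"older inequality (valid as a quasinorm estimate even when $p < 1$, in particular for $p_{1} = p_{2} = 1$ where $p = 1/2$) yields
\begin{equation*}
\|S^{\alpha}(f,g)\|_{p} \leq \||f| \ast K\|_{p_{1}}\,\||g| \ast K\|_{p_{2}} \leq C\|K\|_{1}^{2}\,\|f\|_{p_{1}}\|g\|_{p_{2}}.
\end{equation*}

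There is no genuine obstacle here once Theorem \ref{generalth} is in hand; the only point that requires care is the arithmetic confirming that $m = n+2$ is simultaneously the smallest choice making $K$ integrable and exactly the one for which $4m-1$ coincides with the stated threshold $2Q+3$. I expect the entire argument to occupy only a few lines, and it handles all $1 \leq p_{1}, p_{2} \leq \infty$ uniformly, including the endpoint cases at $(1,1)$ and $(\infty,\infty)$.
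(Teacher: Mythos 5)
Your proposal is correct and follows essentially the same route as the paper: choose $m=n+2=\tfrac{Q}{2}+1$ (so that $4m-1=2Q+3$ and $2m>Q$), invoke Theorem \ref{generalth} for the pointwise kernel bound, and conclude by the generalized H\"older inequality together with Young's inequality. The only cosmetic difference is that you make the factorization $|S^{\alpha}(f,g)|\leq C(|f|\ast K)(|g|\ast K)$ explicit before applying H\"older, whereas the paper applies H\"older and Young in one displayed estimate; the content is identical.
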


\begin{proof}
\quad We take $m=\frac{Q}{2}+1$. If $\alpha >2Q+3$, we have $\alpha >4m-1$.
Then, Theorem \ref{generalth} is available. By H\"{o}lder's inequality and
Young's inequality, we conclude that
\begin{eqnarray*}
\left\Vert S^{\alpha }(f,g)\right\Vert _{p} &\leq &C\left\Vert f\right\Vert
_{p_{1}}\left\Vert g\right\Vert _{p_{2}}\int_{\mathbb{H}^{n}}(1+\left\vert
\omega _{1}\right\vert )^{-2m}d\omega _{1}\int_{\mathbb{H}^{n}}(1+\left\vert
\omega _{2}\right\vert )^{-2m}d\omega _{2} \\
&\leq &C\left\Vert f\right\Vert _{p_{1}}\left\Vert g\right\Vert
_{p_{2}}\left( \int_{1}^{\infty }t^{-2m+Q-1}dt\right) ^{2} \\
&\leq &C\left\Vert f\right\Vert _{p_{1}}\left\Vert g\right\Vert _{p_{2}}.
\end{eqnarray*}
The proof is completed.
\end{proof}

We note that the index in Corollary \ref{cor} is very high. In the rest part
of this paper, we will distinguish various cases and manage to give lower
indices .

\section{Boundedness of $S^{\protect\alpha }$ for $1\leq p_{1},p_{2}\leq 2$}

In this Section, we investigate the $L^{p_{1}}\times L^{p_{2}}\rightarrow
L^{p} $ boundedness of the bilinear Riesz means $S^{\alpha }$ for $1\leq
p_{1},p_{2}\leq 2$.

\begin{lemma}
\label{restriction} Suppose $m\in L^{\infty }(\mathbb{R})$. Define operator $
T_{m}f=\int_{a}^{b}m(\lambda )P_{\lambda }f\,d\mu (\lambda )$ for $0\leq a<b$
. Then, for any $1\leq p\leq 2$, we have
\begin{equation*}
\left\Vert T_{m}f\right\Vert _{2}\leq C\left\Vert m\right\Vert _{\infty
}\left( (b-a)b^{n}\right) ^{(\frac{1}{p}-\frac{1}{2})}\left\Vert
f\right\Vert _{p}.
\end{equation*}
\end{lemma}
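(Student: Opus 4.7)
The plan is to interpolate via Riesz--Thorin between the $L^{2}\to L^{2}$ bound and an $L^{1}\to L^{2}$ bound obtained from a $TT^{\ast}$ argument. At $p=2$ the exponent $(1/p-1/2)=0$ kills the geometric factor, so it suffices to observe that by the spectral decomposition and the Plancherel formula (\ref{Plancherel}), the operator $T_{m}$ acts on the $(k,\lambda)$-Fourier component of $f$ by multiplication by $m((2k+n)|\lambda|)\chi_{[a,b]}((2k+n)|\lambda|)$, which immediately gives the trivial bound $\|T_{m}f\|_{2}\leq \|m\|_{\infty}\|f\|_{2}$.

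For the $L^{1}\to L^{2}$ endpoint I would use the $TT^{\ast}$ trick
\begin{equation*}
\|T_{m}f\|_{2}^{2}=\langle T_{m}^{\ast}T_{m}f,f\rangle \leq \|T_{m}^{\ast}T_{m}f\|_{\infty}\|f\|_{1}\leq \|K\|_{\infty}\|f\|_{1}^{2},
\end{equation*}
where $K$ is the convolution kernel of $T_{m}^{\ast}T_{m}$. Since $T_{m}^{\ast}T_{m}$ is the self-adjoint, positive spectral multiplier of $\mathcal{L}$ associated to the symbol $|m(\lambda)|^{2}\chi_{[a,b]}(\lambda)$, its convolution kernel is positive-definite on $\mathbb{H}^{n}$, hence $\|K\|_{\infty}=K(0,0)$. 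Writing
\begin{equation*}
K(x)=\int_{a}^{b}|m(\lambda)|^{2}p_{\lambda}(x)\,d\mu(\lambda),\qquad p_{\lambda}=\sum_{k=0}^{\infty}(2k+n)^{-n-1}(\widetilde{e}_{k}^{\lambda}+\widetilde{e}_{k}^{-\lambda}),
\end{equation*}
and using $\widetilde{e}_{k}^{\pm\lambda}(0,0)=\varphi_{k}(0)=\binom{k+n-1}{k}$, we see that $p_{\lambda}(0,0)=2\sum_{k}(2k+n)^{-n-1}\binom{k+n-1}{k}$ is an absolute constant in $\lambda$, since the summand is $O(k^{-2})$.

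Because $d\mu(\lambda)=c|\lambda|^{n}d\lambda$ on $(0,\infty)$, this yields
\begin{equation*}
\|K\|_{\infty}\leq C\|m\|_{\infty}^{2}\int_{a}^{b}\lambda^{n}\,d\lambda\leq C\|m\|_{\infty}^{2}(b-a)b^{n},
\end{equation*}
so that $\|T_{m}f\|_{2}\leq C\|m\|_{\infty}((b-a)b^{n})^{1/2}\|f\|_{1}$. Interpolating this with the $L^{2}\to L^{2}$ bound by Riesz--Thorin (with $\theta=2/p-1$) produces the geometric power $\theta/2=1/p-1/2$, which is exactly the exponent claimed. The main technical nuisance will be justifying $\|K\|_{\infty}=K(0,0)$ cleanly, since a priori $K$ is only a tempered distribution; I would handle this by first approximating $m$ by compactly supported smooth functions (for which $K$ is a bounded continuous strictly positive-definite function, so the maximum-at-origin property is standard) and then passing to the limit using the uniform constants in the bound.
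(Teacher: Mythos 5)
Your argument is correct and follows the same overall scheme as the paper's proof: a trivial $L^{2}\rightarrow L^{2}$ bound, an $L^{1}\rightarrow L^{2}$ bound with constant $C\Vert m\Vert _{\infty }\left( (b-a)b^{n}\right) ^{1/2}$, and interpolation between the two. The only real difference is how the $L^{1}\rightarrow L^{2}$ endpoint is obtained. The paper writes $T_{m}f=f\ast G_{m}$, computes $\Vert G_{m}\Vert _{2}^{2}\leq C\Vert m\Vert _{\infty }^{2}(b-a)b^{n}$ by Plancherel in $t$, orthogonality of the $\varphi _{k}^{\lambda }$ and the bound $\Vert \varphi _{k}^{\lambda }\Vert _{2}\leq C\left\vert \lambda \right\vert ^{-n/2}k^{(n-1)/2}$, and then concludes by Young's inequality. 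You instead run $TT^{\ast }$ and evaluate the kernel $K$ of $T_{m}^{\ast }T_{m}$ at the identity; since $K=G_{m}\ast G_{m}^{\ast }$ and $K(e)=\Vert G_{m}\Vert _{2}^{2}$, this is the same quantity computed by a dual route (evaluating the spectral expansion at the origin via $\varphi _{k}(0)=\binom{k+n-1}{k}$ rather than integrating $\left\vert \varphi _{k}^{\lambda }\right\vert ^{2}$), and it produces the same constant. Two small remarks. First, the positive-definiteness detour needed to justify $\Vert K\Vert _{\infty }=K(0,0)$ is avoidable: once you know $\Vert G_{m}\Vert _{2}<\infty $, Young's inequality $\Vert f\ast G_{m}\Vert _{2}\leq \Vert f\Vert _{1}\Vert G_{m}\Vert _{2}$ gives the endpoint directly, which is why the paper never needs this point. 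Second, if you do keep the $TT^{\ast }$ route, the limiting argument you worry about is unnecessary: since $\Vert \varphi _{k}\Vert _{\infty }=c_{n}(k+n-1)!/k!$ is comparable to $\varphi _{k}(0)$, the absolute convergence you verify at the origin holds uniformly in $(z,t)$, so $K$ is manifestly a bounded continuous function with $\Vert K\Vert _{\infty }\leq CK(0,0)$, which suffices.
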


\begin{proof}
\quad Since that
\begin{equation*}
P_{\lambda }f=\sum_{k=0}^{\infty }(2k+n)^{-n-1}f\ast (\widetilde{e}
_{k}^{\lambda }+\widetilde{e}_{k}^{-\lambda })(z,t)\text{, }
\end{equation*}
the operator $T_{m}$ can be written as
\begin{equation*}
T_{m}f=\int_{a}^{b}m(\lambda )P_{\lambda }f\,d\mu (\lambda )=f\ast G_{m},
\end{equation*}
where the kernel is given by
\begin{eqnarray*}
G_{m}(z,t) &=&\int_{a}^{b}m(\lambda )\sum_{k=0}^{\infty }(2k+n)^{-n-1}(
\widetilde{e}_{k}^{\lambda }+\widetilde{e}_{k}^{-\lambda })\,d\mu (\lambda )
\\
&=&C\sum_{k=0}^{\infty }\int_{\frac{a}{2k+n}}^{\frac{b}{2k+n}}(e^{-i\lambda
t}+e^{i\lambda t})m((2k+n)\lambda )\varphi _{k}^{\lambda
}(z)\vert\lambda\vert ^{n}d\lambda .
\end{eqnarray*}
Applying the Plancherel theorem in the variable $t$, the orthogonality of $
\varphi _{k}^{\lambda }$ and the fact
\begin{equation}
\left\Vert \varphi _{k}^{\lambda }\right\Vert _{2}\leq C \left\vert \lambda
\right\vert^{-\frac{n}{2 }}k^{\frac{n-1}{2}},  \label{phi2}
\end{equation}
we get that
\begin{eqnarray*}
\left\Vert G_{m}\right\Vert _{2}^{2} &\leq &C\int_{\mathbb{C} ^{n}}\int_{
\mathbb{R}}\left\vert \sum_{k=0}^{\infty }\int_{\frac{a}{2k+n}}^{\frac{b}{
2k+n} }(e^{-i\lambda t}+e^{i\lambda t})m((2k+n)\lambda )\varphi
_{k}^{\lambda }(z)\vert\lambda\vert ^{n}d\lambda \right\vert ^{2}\, dtdz \\
&=&C\int_{\mathbb{R}}\int_{\mathbb{C} ^{n}}\left\vert \sum_{k=0}^{\infty
}\chi _{[\frac{a}{2k+n},\frac{b}{ 2k+n}]}( \left\vert \lambda \right\vert
)m((2k+n)\left\vert \lambda \right\vert ) \varphi _{k}^{\lambda
}(z)\left\vert \lambda \right\vert ^{n}\right\vert ^{2}\, dzd\lambda \\
&\leq &C\int_{\mathbb{R}}\sum_{k=0}^{\infty }\left\vert \chi _{[\frac{a}{2k+n
},\frac{b}{2k+n} ]}(\left\vert \lambda \right\vert )m((2k+n)\left\vert
\lambda \right\vert )\right\vert ^{2} \left\Vert \varphi_{k}^{\lambda
}\right\Vert ^{2}_{2}\vert\lambda \vert^{2n}d\lambda \\
&\leq &C\left( \sum_{k=0}^{\infty }(2k+n)^{-n-1}k^{n-1}\right)
\int_{a}^{b}\left\vert m(\lambda )\right\vert ^{2}\vert\lambda\vert
^{n}d\lambda \\
&\leq &C\left\Vert m\right\Vert _{\infty }^{2}(b-a)b^{n}.
\end{eqnarray*}
This implies that
\begin{equation*}
\left\Vert T_{m}f\right\Vert _{2}\leq C\left( (b-a)b^{n}\right) ^{\frac{1}{2}
}\left\Vert m\right\Vert _{\infty }\left\Vert f\right\Vert _{1}\text{.}
\end{equation*}
By interpolation with the trivial estimate
\begin{equation*}
\left\Vert T_{m}f\right\Vert _{2}\leq \left\Vert m\right\Vert _{\infty
}\left\Vert f\right\Vert _{2},
\end{equation*}
we conclude that
\begin{equation*}
\left\Vert T_{m}f\right\Vert _{2}\leq C\left( (b-a)b^{n}\right) ^{(\frac{1}{
p }-\frac{1}{2})}\left\Vert m\right\Vert _{\infty }\left\Vert f\right\Vert
_{p}.
\end{equation*}
The proof is completed.
\end{proof}

\begin{theorem}
\label{mainTh} Suppose that $1\leq p_{1},p_{2}\leq 2$ and $
1/p=1/p_{1}+1/p_{2}$. If $\alpha >Q\left( \frac{1}{p}-1\right) $, then $
S^{\alpha }$ is bounded from $L^{p_{1}}(\mathbb{H}^{n})\times L^{p_{2}}(
\mathbb{H} ^{n}) $ into $L^{p}(\mathbb{H}^{n})$.
\end{theorem}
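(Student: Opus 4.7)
My plan is to prove Theorem \ref{mainTh} by a dyadic decomposition of the Riesz weight $(1-\lambda_1-\lambda_2)_+^{\alpha}$ along the diagonal line $\lambda_1+\lambda_2=1$, combined with the restriction estimate from Lemma \ref{restriction} and the orthogonality of the spectral projections.

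First, I would choose $\psi_j\in C_c^{\infty}(\mathbb{R})$ supported in $[1-2^{-j+1},1-2^{-j-1}]$ with $\|\psi_j\|_{\infty}\le C2^{-j\alpha}$ so that $(1-s)_+^{\alpha}=\sum_{j\ge 0}\psi_j(s)$. Correspondingly $S^{\alpha}=\sum_j S_j$, where
\[
S_j(f,g)(x)=\int_0^{\infty}\int_0^{\infty}\psi_j(\lambda_1+\lambda_2)P_{\lambda_1}f(x)P_{\lambda_2}g(x)\,d\mu(\lambda_1)d\mu(\lambda_2).
\]
Each $S_j$ is supported on the strip $\{(\lambda_1,\lambda_2):|\lambda_1+\lambda_2-1|\sim 2^{-j}\}$, which has two-dimensional $d\mu\times d\mu$ measure of order $2^{-j}$.

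Next, writing $E_Tf:=\int_T P_{\lambda}f\,d\mu(\lambda)$ for the spectral projection onto an interval $T\subset[0,\infty)$, I would partition the strip into roughly $2^j$ rectangles $I_{k_1}\times J_{k_2}$ of side $2^{-j}$ (with $k_1+k_2\approx 2^j$ so that the rectangle meets the strip), yielding the schematic pointwise bound
\[
|S_j(f,g)(x)|\le C2^{-j\alpha}\sum_{k_1+k_2\approx 2^j}|E_{I_{k_1}}f(x)|\,|E_{J_{k_2}}g(x)|.
\]
The central tools would then be Lemma \ref{restriction} applied to $[a,b]=[0,1]$ with $m\equiv 1$, which yields $\|E_{[0,1]}f\|_2\le C\|f\|_{p_1}$ for every $1\le p_1\le 2$, together with the orthogonality identity $\sum_{k_1}\|E_{I_{k_1}}f\|_2^2=\|E_{[0,1]}f\|_2^2\le C\|f\|_{p_1}^{2}$ for disjoint $I_{k_1}$, and the analogous bound for $g$. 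Applying H\"older's inequality in $x$ and Cauchy-Schwarz on the (essentially one-dimensional) constrained $k$-sum, one should assemble a per-piece estimate of the form
\[
\|S_j(f,g)\|_p\le C2^{-j(\alpha-Q(1/p-1))}\|f\|_{p_1}\|g\|_{p_2},
\]
which is summable in $j$ precisely when $\alpha>Q(1/p-1)$.

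The main obstacle is the quasi-Banach range $p<1$, which occurs whenever $1/p_1+1/p_2>1$ and includes the critical corner $(p_1,p_2)=(1,1)$, $p=1/2$. Here Minkowski's inequality fails, and replacing it by $|\sum_i a_i|^{p}\le\sum_i|a_i|^{p}$ forces one to control $\|E_{I_{k_1}}f\|_{p_1}$ individually for $p_1<2$, rather than only $\|E_{I_{k_1}}f\|_2$ in aggregate via orthogonality. This is a delicate Littlewood--Paley type question on the Heisenberg group, more subtle than its Euclidean counterpart because of the center of dimension one noted in the introduction. I expect that at this step one must combine Lemma \ref{restriction} applied at several scales with the kernel decay of Theorem \ref{generalth}, producing exactly the exponent $Q(1/p-1)$ coming from the homogeneous dimension $Q=2n+2$.
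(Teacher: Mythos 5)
Your first step (the dyadic decomposition along the line $\lambda_1+\lambda_2=1$) coincides with the paper's, but the proposal has two genuine gaps. First, the ``schematic pointwise bound'' does not actually follow: on a rectangle $I_{k_1}\times J_{k_2}$ the multiplier $\psi_j(\lambda_1+\lambda_2)$ is not a product $m_1(\lambda_1)m_2(\lambda_2)$, so $\int_{I_{k_1}}\int_{J_{k_2}}\psi_j(\lambda_1+\lambda_2)P_{\lambda_1}f\,P_{\lambda_2}g\,d\mu\,d\mu$ cannot be bounded by $\|\psi_j\|_\infty\,|E_{I_{k_1}}f(x)|\,|E_{J_{k_2}}g(x)|$. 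Some separation-of-variables device is required; the paper expands $\varphi_j^{\alpha}(|s|,|t|)$ in a Fourier series in $t$ with coefficients $\gamma_{j,k}^{\alpha}(s)$ satisfying $\sup_s|\gamma_{j,k}^{\alpha}(s)|(1+|k|)^{1+\delta}\le C2^{-j(\alpha-\delta)}$, which turns $T_j^{\alpha}$ into a rapidly convergent sum of products of two \emph{linear} multiplier operators, each controlled by Lemma \ref{restriction} and Cauchy--Schwarz.

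Second, and more seriously, the obstacle you flag at the end --- the quasi-Banach range $p<1$ --- is not a technicality to be patched later; it is the heart of the theorem, and your proposal contains no mechanism that produces the exponent $Q(1/p-1)$. The restriction lemma together with orthogonality and Cauchy--Schwarz naturally yields only $\|T_j^{\alpha}(f,g)\|_1\le C2^{-j(\alpha-\delta)}\|f\|_{p_1}\|g\|_{p_2}$; no amount of spectral bookkeeping over the strip lowers the output exponent below $1$. The paper's resolution is spatial, not spectral: it truncates the kernel $K_j^{\alpha}$ to the ball $B_j=\{|\omega|\le 2^{j(1+\gamma)}\}$ in each variable, disposes of the three far pieces using the pointwise decay $|R_t^{2m+1}(\omega)|\le Ct^{n+1}(1+t^{1/2}|\omega|)^{-2m}$ of the Riesz kernels (after integrating by parts in the multiplier), and for the near piece localizes $f$ and $g$ to balls $B_j(\xi,\cdot)$. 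On such a ball H\"older's inequality converts the $L^1$ bound into an $L^p$ bound at the cost of a factor $\sim 2^{j(1+\gamma)Q(1/p-1)}$, and taking the $L^p$ norm in the center $\xi$ (using $p/p_1+p/p_2=1$) globalizes the estimate; this is precisely where $Q(1/p-1)$ enters. The kernel decay of Theorem \ref{generalth} cannot substitute for this localization, since that estimate requires $\alpha>4m-1$ and is far too lossy near the critical index.
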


\begin{proof}
\quad We choose a nonnegative function $\varphi $ $\in C_{0}^{\infty }(\frac{
1}{2},2)$ satisfying $\sum_{-\infty }^{\infty }\varphi (2^{j}s)=1$, $s>0$.
For each $j\geq 0$, we set function
\begin{equation*}
\varphi _{j}^{\alpha }\left( s,t\right) =(1-s-t)_{+}^{\alpha }\varphi \big(
2^{j}\left( 1-s-t\right) \big),
\end{equation*}
and define bilinear operator
\begin{equation*}
T_{j}^{\alpha }(f,g)=\int_{0}^{\infty }\int_{0}^{\infty }\varphi
_{j}^{\alpha }\left( \lambda _{1},\lambda _{2}\right) P_{\lambda
_{1}}fP_{\lambda _{2}}g\,d\mu (\lambda _{1})d\mu (\lambda _{2}).
\end{equation*}
It is obvious that
\begin{equation*}
S^{\alpha }=\sum_{j=0}^{\infty }T_{j}^{\alpha },
\end{equation*}
and our result would follow if we can show that when $\alpha >Q\left( \frac{
1 }{p}-1\right) $, there exists an $\varepsilon >0$ such that for each $
j\geq 0 $,
\begin{equation}
\left\Vert T_{j}^{\alpha }\right\Vert _{L^{1}\times L^{p_{2}}\rightarrow
L^{p}}\leq C2^{-\varepsilon j}.  \label{Tj-}
\end{equation}
Fixing $j\geq 0$. In order to prove (\ref{Tj-}), we define $B_{j}=\{\omega
:\left\vert \omega \right\vert \leq 2^{j(1+\gamma )}\}\subseteq \mathbb{H}
^{n}$ and split the kernel $K_{j}^{\alpha }$ of $T_{j}^{\alpha }$ into four
parts:
\begin{equation*}
K_{j}^{\alpha }=K_{j}^{1}+K_{j}^{2}+K_{j}^{3}+K_{j}^{4},
\end{equation*}
where
\begin{eqnarray*}
K_{j}^{1}(\omega _{1},\omega _{2}) &=&K_{j}^{\alpha }(\omega _{1},\omega
_{2})\chi _{B_{j}}(\omega _{1})\chi _{B_{j}}(\omega _{2}), \\
K_{j}^{2}(\omega _{1},\omega _{2}) &=&K_{j}^{\alpha }(\omega _{1},\omega
_{2})\chi _{B_{j}}(\omega _{1})\chi _{B_{j}^{c}}(\omega _{2}), \\
K_{j}^{3}(\omega _{1},\omega _{2}) &=&K_{j}^{\alpha }(\omega _{1},\omega
_{2})\chi _{B_{j}^{c}}(\omega _{1})\chi _{B_{j}}(\omega _{2}), \\
K_{j}^{4}(\omega _{1},\omega _{2}) &=&K_{j}^{\alpha }(\omega _{1},\omega
_{2})\chi _{B_{j}^{c}}(\omega _{1})\chi _{B_{j}^{c}}(\omega _{2}).
\end{eqnarray*}
Here $\chi _{A}$ stands for the characteristic function of set $A$ and $
\gamma >0$ is to be fixed. Let $T_{j}^{l}$ be the bilinear operator with
kernel $K_{j}^{l}$, $l=1,2,3,4$. Then, (\ref{Tj-}) would be the consequence
of the estimates
\begin{equation*}
\left\Vert T_{j}^{l}\right\Vert _{L^{p_{1}}\times L^{p_{2}}\rightarrow
L^{p}}\leq C2^{-\varepsilon j},\quad l=1,2,3,4.
\end{equation*}

We first consider $T_{j}^{4}$. Set $R_{t}^{l}(\omega )$ to be the kernel of
the Riesz means $\int_{0}^{t}(1-\frac{\lambda }{t})^{l}P_{\lambda }\,d\mu
(\lambda )$. Then,
\begin{equation*}
t\rightarrow R_{t}^{0}(\omega )
\end{equation*}
is a function of bounded variation and the kernel of $T_{j}^{\alpha }$ can
be written as
\begin{equation*}
K_{j}^{\alpha }(\omega _{1},\omega _{2})=\int \varphi _{j}^{\alpha }(s,t)
\frac{\partial }{\partial s}R_{s}^{0}(\omega _{1})\frac{\partial }{\partial
t }R_{t}^{0}(\omega _{2})\,dsdt.
\end{equation*}
Intergrating by parts and using the identity
\begin{equation*}
\frac{\partial }{\partial t}\big(t^{m}R_{t}^{m}(\omega )\big)
=mt^{m-1}R_{t}^{m-1}(\omega ),
\end{equation*}
where $m$ is a positive integer, we get the relation
\begin{equation*}
K_{j}^{\alpha }(\omega _{1},\omega _{2})=c_{m}\int \left(\left( \partial
_{s}\partial _{t}\right) ^{2m+2}\varphi _{j}^{\alpha
}(s,t)\right)s^{2m+1}R_{s}^{2m+1}(\omega _{1})t^{2m+1}R_{t}^{2m+1}(\omega
_{2})\,dsdt.
\end{equation*}
Note that (see Theorem 2.5.3 in \cite{Thang})
\begin{equation}
\left\vert R_{t}^{2m+1}(\omega )\right\vert \leq Ct^{n+1}(1+t^{\frac{1}{2}
}\left\vert \omega \right\vert )^{-2m},  \label{Rt}
\end{equation}
This estimate, together with the bound
\begin{equation*}
\left\vert \left( \partial _{s}\partial _{t}\right) ^{2m+2}\varphi
_{j}^{\alpha }\right\vert \leq C2^{j(4m+4)}
\end{equation*}
imply that
\begin{equation*}
\left\vert K_{j}^{\alpha }(\omega _{1},\omega _{2})\right\vert \leq
C2^{j(4m+4)}(1+\left\vert \omega _{1}\right\vert )^{-2m}(1+\left\vert \omega
_{2}\right\vert )^{-2m}. 
\end{equation*}
So, we can use H\"{o}lder's inequality and Young's inequality to get that
\begin{eqnarray*}
\left\Vert T_{j}^{4}(f,g)\right\Vert _{p} &\leq &C\left\Vert f\right\Vert
_{p_{1}}\left\Vert g\right\Vert _{p_{2}}2^{j(4m+4)}\int_{\left\vert \omega
_{1}\right\vert \geq 2^{j(1+\gamma )}}(1+\left\vert \omega _{1}\right\vert
)^{-2m}d\omega _{1} \\
&&\times \int_{\left\vert \omega _{2}\right\vert \geq 2^{j(1+\gamma
)}}(1+\left\vert \omega _{2}\right\vert )^{-2m}d\omega _{2} \\
&\leq &C2^{j(4m+4)}2^{j(1+\gamma )(-4m+2Q)}\left\Vert f\right\Vert
_{p_{1}}\left\Vert g\right\Vert _{p_{2}}.
\end{eqnarray*}
Choosing $m$ large enough such that
\begin{equation*}
4m\gamma \geq 2Q(1+\gamma )+4,
\end{equation*}
we have
\begin{equation}
\left\Vert T_{j}^{4}\right\Vert _{L^{p_{1}}\times L^{p_{2}}\rightarrow
L^{p}}\leq C2^{-\varepsilon j}  \label{Tj4}
\end{equation}
for some $\varepsilon >0$.

Next, we consider the estimate of $T_{j}^{3}$. Note that $K_{j}^{\alpha }$
also can be written as
\begin{equation*}
K_{j}^{\alpha }(\omega _{1},\omega _{2})=c_{m}\int_{0}^{1}\int_{0}^{1}\left(
\partial _{\lambda _{1}}^{2m+2}\varphi _{j}^{\alpha }(\lambda _{1},\lambda
_{2})\right) \lambda _{1}^{2m+1}R_{\lambda _{1}}^{2m+1}(\omega
_{1})G_{\lambda _{2}}(\omega _{2})\,d\lambda _{1}d\mu (\lambda _{2})
\end{equation*}
where
\begin{equation*}
G_{\lambda _{2}}(\omega _{2})=G_{\lambda _{2}}(z,t)=\sum_{k=0}^{\infty
}(2k+n)^{-n-1}(\widetilde{e}_{k}^{\lambda _{2}}+\widetilde{e}_{k}^{-\lambda
_{2}})(z,t)
\end{equation*}
is the kernel of the projection operator $P_{\lambda _{2}}$. Then, it
follows that
\begin{eqnarray*}
&&\left\vert K_{j}^{3}(\omega _{1},\omega _{2})\right\vert  \\
&=&\left\vert K_{j}^{\alpha }(\omega _{1},\omega _{2})\chi
_{B_{j}^{c}}(\omega _{1})\chi _{B_{j}}(\omega _{2})\right\vert  \\
&\leq &C\int_{0}^{1}\left\vert \lambda _{1}^{2m+1}R_{\lambda
_{1}}^{2m+1}(\omega _{1})\chi _{B_{j}^{c}}(\omega _{1})\right\vert
\left\vert \int_{0}^{1}\left( \partial _{\lambda _{2}}^{2m+2}\varphi
_{j}^{\alpha }(\lambda _{1},\lambda _{2})\right) G_{\lambda _{2}}(\omega
_{2})\chi _{B_{j}}(\omega _{2})\,d\mu (\lambda _{2})\right\vert d\lambda _{1}
\\
&\leq &C\sup_{\lambda _{1}\in \lbrack 0,1]}\left\vert \lambda
_{1}^{2m+1}R_{\lambda _{1}}^{2m+1}(\omega _{1})\chi _{B_{j}^{c}}(\omega
_{1})\right\vert \int_{0}^{1}\left\vert \int_{0}^{1}\left( \partial
_{\lambda _{2}}^{2m+2}\varphi _{j}^{\alpha }(\lambda _{1},\lambda
_{2})\right) G_{\lambda _{2}}(\omega _{2})\chi _{B_{j}}(\omega _{2})\,d\mu
(\lambda _{2})\right\vert d\lambda _{1}.
\end{eqnarray*}
We get
\begin{eqnarray*}
\left\Vert T_{j}^{3}(f,g)\right\Vert _{p} &\leq &C\left\Vert f\right\Vert
_{p_{1}}\left\Vert g\right\Vert _{p_{2}}\int_{\left\vert \omega
_{1}\right\vert \geq 2^{j(1+\gamma )}}\sup_{\lambda _{1}\in \lbrack 0,1]}
\Big\vert\lambda _{1}^{2m+1}R_{\lambda _{1}}^{2m+1}(\omega _{1})\Big \vert
\,d\omega _{1} \\
&&\times \int_{\left\vert \omega _{2}\right\vert \leq 2^{j(1+\gamma
)}}\int_{0}^{1}\left\vert \int_{0}^{1}\left( \partial _{\lambda
_{2}}^{2m+2}\varphi _{j}^{\alpha }(\lambda _{1},\lambda _{2})\right)
G_{\lambda _{2}}(\omega _{2})\chi _{B_{j}}(\omega _{2})\,d\mu (\lambda
_{2})\right\vert d\lambda _{1}d\omega _{2}.
\end{eqnarray*}
Applying the Plancherel theorem in the variable $t$, the orthogonality of $
\varphi _{k}^{\lambda }$ and (\ref{phi2}), we have that
\begin{eqnarray*}
&&\int_{0}^{1}\int_{\left\vert \omega _{2}\right\vert \leq 2^{j(1+\gamma
)}}\left\vert \int_{0}^{1}\left( \partial _{\lambda _{2}}^{2m+2}\varphi
_{j}^{\alpha }(\lambda _{1},\lambda _{2})\right) G_{\lambda _{2}}(\omega
_{2})\,d\mu (\lambda _{2})\right\vert d\omega _{2}d\lambda _{1} \\
&\leq &C2^{j\frac{(1+\gamma )Q}{2}}\int_{0}^{1}\left( \int_{\mathbb{C}
^{n}}\int_{\mathbb{R}}\left\vert \int_{0}^{1}\left( \partial _{\lambda
_{2}}^{2m+2}\varphi _{j}^{\alpha }(\lambda _{1},\lambda _{2})\right)
G_{\lambda _{2}}(\omega _{2})\,d\mu (\lambda _{2})\right\vert
^{2}dtdz\right) ^{\frac{1}{2}}d\lambda _{1} \\
&\leq &C2^{j\frac{(1+\gamma )Q}{2}}\int_{0}^{1}\left( \int_{\mathbb{C}
^{n}}\int_{\mathbb{R}}\left\vert \sum_{k=0}^{\infty }\int_{0}^{\frac{1}{2k+n}
}e^{-i\lambda _{2}t}\left( \partial _{\lambda _{2}}^{2m+2}\varphi
_{j}^{\alpha }(\lambda _{1},(2k+n)\left\vert \lambda _{2}\right\vert
)\right) \varphi _{k}^{\lambda _{2}}(z)\,d\mu (\lambda _{2})\right\vert
^{2}dtdz\right) ^{\frac{1}{2}}d\lambda _{1} \\
&\leq &C2^{j\frac{(1+\gamma )Q}{2}}\int_{0}^{1}\left( \int_{\mathbb{R}}\int_{
\mathbb{C}^{n}}\left\vert \sum_{k=0}^{\infty }\chi _{\lbrack 0,\frac{1}{2k+n}
]}(\lambda _{2})\left( \partial _{\lambda _{2}}^{2m+2}\varphi _{j}^{\alpha
}(\lambda _{1},(2k+n)\left\vert \lambda _{2}\right\vert )\right) \varphi
_{k}^{\lambda _{2}}(z)\,\right\vert ^{2}dz\left\vert \lambda _{2}\right\vert
^{2n}d\lambda _{2}\right) ^{\frac{1}{2}}d\lambda _{1} \\
&=&C2^{j\frac{(1+\gamma )Q}{2}}\int_{0}^{1}\left( \sum_{k=0}^{\infty
}\int_{0}^{\frac{1}{2k+n}}\left\vert \partial _{\lambda _{2}}^{2m+2}\varphi
_{j}^{\alpha }(\lambda _{1},(2k+n)\left\vert \lambda _{2}\right\vert
)\right\vert ^{2}\left\Vert \varphi _{k}^{\lambda }\,\right\Vert
_{2}^{2}\left\vert \lambda _{2}\right\vert ^{2n}\,d\lambda _{2}\right) ^{
\frac{1}{2}}d\lambda _{1} \\
&\leq &C2^{j\frac{(1+\gamma )Q}{2}}\int_{0}^{1}\left( \sum_{k=0}^{\infty
}(2k+n)^{-n-1}k^{n-1}\int_{0}^{1}\left\vert \partial _{\lambda
_{2}}^{2m+2}\varphi _{j}^{\alpha }(\lambda _{1},\left\vert \lambda
_{2}\right\vert )\right\vert ^{2}\left\vert \lambda _{2}\right\vert
^{n}\,d\lambda _{2}\right) ^{\frac{1}{2}}d\lambda _{1} \\
&\leq &C2^{j\frac{(1+\gamma )Q}{2}}\int_{0}^{1}\left( \int_{0}^{1}\left\vert
\partial _{\lambda _{2}}^{2m+2}\varphi _{j}^{\alpha }(\lambda
_{1},\left\vert \lambda _{2}\right\vert )\right\vert ^{2}\left\vert \lambda
_{2}\right\vert ^{n}\,d\lambda _{2}\right) ^{\frac{1}{2}}d\lambda _{1} \\
&\leq &C2^{j\frac{(1+\gamma )Q}{2}}2^{j(2m+2)}.
\end{eqnarray*}
On the other hand, from (\ref{Rt}), we have
\begin{equation*}
\sup_{\lambda _{1}\in \lbrack 0,1]}\left\vert \lambda _{1}^{2m+1}R_{\lambda
_{1}}^{2m+1}(\omega _{1})\right\vert \leq C(1+\left\vert \omega
_{1}\right\vert )^{-2m},
\end{equation*}
which implies that
\begin{eqnarray*}
\int_{\left\vert \omega _{1}\right\vert \geq 2^{j(1+\gamma )}}\sup_{\lambda
_{1}\in \lbrack 0,1]}\left\vert \lambda _{1}^{2m+1}R_{\lambda
_{1}}^{2m+1}(\omega _{1})\right\vert d\omega _{1} &\leq &C\int_{\left\vert
\omega _{1}\right\vert \geq 2^{j(1+\gamma )}}(1+\left\vert \omega
_{1}\right\vert )^{-2m}d\omega _{1} \\
&\leq &C2^{j(1+\gamma )(-2m+Q)}.
\end{eqnarray*}
Thus,
\begin{equation*}
\left\Vert T_{j}^{3}(f,g)\right\Vert _{p}\leq C\left\Vert f\right\Vert
_{p_{1}}\left\Vert g\right\Vert _{p_{2}}2^{j\frac{(1+\gamma )Q}{2}
}2^{j(2m+2)}2^{j(1+\gamma )(-2m+Q)}\text{,}
\end{equation*}
and we can choose $m$ large enough such that
\begin{equation*}
2m\gamma \geq \frac{3}{2}Q(1+\gamma )+2,
\end{equation*}
which yields that
\begin{equation}
\left\Vert T_{j}^{3}\right\Vert _{L^{p_{1}}\times L^{p_{2}}\rightarrow
L^{p}}\leq C2^{-\varepsilon j}  \label{Tj3}
\end{equation}
for some $\varepsilon >0$. Obviously, (\ref{Tj3}) also holds for $T_{j}^{2}$.

Now, it remains to estimate $T_{j}^{1}$. For $\xi \in \mathbb{H}^{n}$, we
set $B_{j}(\xi ,R)=\big\{\omega :\left\vert \xi ^{-1}\omega \right\vert \leq
R2^{j(1+\gamma )}\big\}$ with $R>0$, and slipt the functions $f$ and $g$
into three parts respectively: $f=f_{1}+f_{2}+f_{3}$, $g=g_{1}+g_{2}+g_{3}$,
where
\begin{eqnarray*}
f_{1} &=&f\chi _{B_{j}(\xi ,\frac{3}{4})},\qquad \qquad g_{1}=g\chi
_{B_{j}(\xi ,\frac{3}{4})}, \\
f_{2} &=&f\chi _{B_{j}(\xi ,\frac{5}{4})\backslash B_{j}(\xi ,\frac{3}{4}
)},\quad g_{2}=g\chi _{B_{j}(\xi ,\frac{5}{4})\backslash B_{j}(\xi ,\frac{3}{
4})}, \\
f_{3} &=&f\chi _{\mathbb{H}^{n}\backslash B_{j}(\xi ,\frac{5}{4})},\qquad \
\ g_{3}=g\chi _{\mathbb{H}^{n}\backslash B_{j}(\xi ,\frac{5}{4})}.
\end{eqnarray*}
Assume that $\left\vert \xi ^{-1}\omega \right\vert \leq \frac{1}{4}
2^{j(1+\gamma )}$. Since that $f_{3}$ is supported on $\mathbb{H}
^{n}\backslash B_{j}(\xi ,\frac{5}{4})$, then $f_{3}\neq 0$ leads to
\begin{equation*}
\left\vert \xi ^{-1}\omega _{1}\right\vert \geq \frac{5}{4}2^{j(1+\gamma )}.
\end{equation*}
It follows that
\begin{equation*}
\left\vert \omega _{1}^{-1}\omega \right\vert \geq 2^{j(1+\gamma )}.
\end{equation*}
Note that the kernel $K_{j}^{1}$ is supported on $B_{j}\times B_{j}$. Hence,
$T_{j}^{1}(f_{3},g)=0$. In the same way, $T_{j}^{1}(f,g_{3})=0$. Since that $
f_{2}$ and $g_{2}$ are supported on $B_{j}(\xi ,\frac{5}{4})\backslash
B_{j}(\xi ,\frac{3}{4})$, then $f_{2},g_{2}\neq 0$ yields that
\begin{equation*}
\left\vert \omega _{1}^{-1}\omega \right\vert \geq \frac{1}{2}2^{j(1+\gamma
)}\text{ and }\left\vert \omega _{2}^{-1}\omega \right\vert \geq \frac{1}{2}
2^{j(1+\gamma )}.
\end{equation*}
Repeating the proof of (\ref{Tj4}), we get
\begin{eqnarray}
\left\Vert T_{j}^{1}(f_{2},g_{2})\right\Vert _{L^{p}(B_{j}(\xi ,\frac{1}{4}
))} &\leq &C2^{-\varepsilon j}\left\Vert f_{2}\right\Vert _{p_{1}}\left\Vert
g_{2}\right\Vert _{p_{2}}  \label{f7} \\
&\leq &C2^{-\varepsilon j}\left\Vert f\right\Vert _{L^{p_{1}}(B_{j}(\xi ,
\frac{5 }{4}))}\left\Vert g\right\Vert _{L^{p_{2}}(B_{j}(\xi ,\frac{5}{4}))}.
\notag
\end{eqnarray}
Taking the $L^{p}$ norm with respect to $\xi $ on the both side of (\ref{f7}) and using H\"{o}lder's inequality, we have that
\begin{eqnarray}
&&\left( \int_{\mathbb{H}^{n}}\int_{B_{j}(\xi ,\frac{1}{4})}\left\vert
T_{j}^{1}(f_{2},g_{2})(\omega )\right\vert ^{p}d\omega d\xi \right) ^{\frac{
1 }{p}}  \notag \\
&\leq &C2^{-\varepsilon j}\left( \int_{\mathbb{H}^{n}}\int_{B_{j}(\xi ,\frac{
5}{4 })}\left\vert f(\omega )\right\vert ^{p_{1}}d\omega d\xi \right) ^{
\frac{1}{ p_{1}}}\left( \int_{\mathbb{H}^{n}}\int_{B_{j}(\xi ,\frac{5}{4}
)}\left\vert g(\omega )\right\vert ^{p_{2}}d\omega d\xi \right) ^{\frac{1}{
p_{2}}}.  \notag
\end{eqnarray}
Changing variable and exchanging the order of integration, the left side
equals to
\begin{eqnarray*}
\left( \int_{\mathbb{H}^{n}}\int_{\left\vert \omega \right\vert \leq \frac{1
}{4}2^{j(1+\gamma )}}\left\vert T_{j}^{1}(f_{2},g_{2})(\xi \omega
)\right\vert ^{p}d\omega d\xi \right) ^{\frac{1}{p}} &=&\left(
\int_{\left\vert \omega \right\vert \leq \frac{1}{4}2^{j(1+\gamma )}}\int_{
\mathbb{H}^{n}}\left\vert T_{j}^{1}(f_{2},g_{2})(\xi \omega )\right\vert
^{p}d\xi d\omega \right) ^{\frac{1}{p}} \\
&=&\left( \frac{1}{4}2^{j(1+\gamma )}\right) ^{\frac{Q}{p}}\left\Vert
T_{j}^{1}(f_{2},g_{2})\right\Vert _{p},
\end{eqnarray*}
and the right side equals to
\begin{equation*}
C2^{-\varepsilon j}\left( \frac{5}{4}2^{j(1+\gamma )}\right) ^{\frac{Q}{
p_{1} }}\left\Vert f\right\Vert _{p_{1}}\left( \frac{5}{4}2^{j(1+\gamma
)}\right) ^{\frac{Q}{p_{2}}}\left\Vert g\right\Vert
_{p_{2}}=C2^{-\varepsilon j}\left( \frac{5}{4}2^{j(1+\gamma )}\right) ^{
\frac{Q}{p}}\left\Vert f\right\Vert _{p_{1}}\left\Vert g\right\Vert _{p_{2}}.
\end{equation*}
This yields that
\begin{equation}
\left\Vert T_{j}^{1}(f_{2},g_{2})\right\Vert _{p}\leq C2^{-\varepsilon
j}\left\Vert f\right\Vert _{p_{1}}\left\Vert g\right\Vert _{p_{2}}.
\label{f3}
\end{equation}
Since $f_{1}$ is supported on $B_{j}(\xi ,\frac{3}{4})$, then $
f_{1},g_{2}\neq 0$ implies that
\begin{equation*}
\left\vert \omega _{1}^{-1}\omega \right\vert \leq 2^{j(1+\gamma )}\text{
and }\left\vert \omega _{2}^{-1}\omega \right\vert \geq \frac{1}{2}
2^{j(1+\gamma )}.
\end{equation*}
We can repeat the proof of (\ref{Tj3}) to get that
\begin{eqnarray*}
\left\Vert T_{j}^{1}(f_{1},g_{2})\right\Vert _{L^{p}(B_{j}(\xi ,\frac{1}{4}
))} &\leq &C2^{-\varepsilon j}\left\Vert f_{1}\right\Vert _{p_{1}}\left\Vert
g_{2}\right\Vert _{p_{2}} \\
&\leq &C2^{-\varepsilon j}\left\Vert f\right\Vert _{L^{p_{1}}(B_{j}(\xi ,
\frac{3 }{4}))}\left\Vert g\right\Vert _{L^{p_{2}}(B_{j}(\xi ,\frac{5}{4}))}.
\end{eqnarray*}
Taking the $L^{p}$ norm with respect to $\xi $ as above, we have
\begin{equation}
\left\Vert T_{j}^{1}(f_{1},g_{2})\right\Vert _{p}\leq C2^{-\varepsilon
j}\left\Vert f\right\Vert _{p_{1}}\left\Vert g\right\Vert _{p_{2}}.
\label{f2}
\end{equation}
Obviously, (\ref{f2}) also holds for $T_{j}^{1}(f_{2},g_{1})$. Finally, we
consider $T_{j}^{1}(f_{1},g_{1})$. Because $f_{1},g_{1}\neq 0$ implies that
\begin{equation*}
\left\vert \omega _{1}^{-1}\omega \right\vert \leq 2^{j(1+\gamma )}\text{
and }\left\vert \omega _{2}^{-1}\omega \right\vert \leq 2^{j(1+\gamma )},
\end{equation*}
we have
\begin{equation}
T_{j}^{1}(f_{1},g_{1})(\omega )=T_{j}^{\alpha }(f_{1},g_{1})(\omega ).
\label{f1}
\end{equation}
Note that $T_{j}^{\alpha }$ can be written as
\begin{eqnarray*}
T_{j}^{\alpha }(f,g) &=&\int_{0}^{\infty }\int_{0}^{\infty }\varphi
_{j}^{\alpha }\left( \lambda _{1},\lambda _{2}\right) P_{\lambda
_{1}}fP_{\lambda _{2}}g\,d\mu (\lambda _{1})d\mu (\lambda _{2}) \\
&=&C\int_{[-1,1]^{2}}\varphi _{j}^{\alpha }\left( \left\vert \lambda
_{1}\right\vert ,\left\vert \lambda _{2}\right\vert \right) P_{\left\vert
\lambda _{1}\right\vert }fP_{\left\vert \lambda _{2}\right\vert }g\,d\mu
(\lambda _{1})d\mu (\lambda _{2})\text{.}
\end{eqnarray*}
Because for any fixed $s\in \lbrack -1,1]$, the function
\begin{equation*}
t\rightarrow \varphi _{j}^{\alpha }\left( \left\vert s\right\vert
,\left\vert t\right\vert \right)
\end{equation*}
is supported in $[-1,1]$ and vanishes at endpoints $\pm 1$, so we can expand
this function in Fourier series by considering a periodic extension on $
\mathbb{R}$ of period $2$. Then, we have
\begin{equation*}
\varphi _{j}^{\alpha }(\left\vert s\right\vert ,\left\vert t\right\vert
)=\sum_{k\in \mathbb{Z}}\gamma _{j,k}^{\alpha }(s)e^{i\pi kt}
\end{equation*}
with Fourier coefficients
\begin{equation*}
\gamma _{j,k}^{\alpha }(s)=\frac{1}{2} \int_{-1}^{1}\varphi _{j}^{\alpha
}(\left\vert s\right\vert ,\left\vert t\right\vert )e^{-i\pi kt}\,dt\text{.}
\end{equation*}
It is easy to see that for any $0< \delta < \alpha$,
\begin{equation*}
\sup_{s\in \lbrack -1,1]}\left\vert \gamma _{j,k}^{\alpha }(s)\right\vert
\left( 1+\left\vert k\right\vert \right) ^{1+\delta}\leq C2^{-j(\alpha
-\delta)}.
\end{equation*}
$T_{j}^{\alpha }$ can be expressed by
\begin{eqnarray*}
T_{j}^{\alpha }(f,g) &=&C\int_{[-1,1]^{2}}\varphi _{j}^{\alpha }\left(
\left\vert \lambda _{1}\right\vert ,\left\vert \lambda _{2}\right\vert
\right) P_{\left\vert \lambda _{1}\right\vert }fP_{\left\vert \lambda
_{2}\right\vert }g\,d\mu (\lambda _{1})d\mu (\lambda _{2}) \\
&=&C\sum_{k\in \mathbb{Z}}\int_{[-1,1]^{2}}\gamma _{j,k}^{\alpha }(\lambda
_{1})e^{i\pi k\lambda _{2}}P_{\left\vert \lambda _{1}\right\vert
}fP_{\left\vert \lambda _{2}\right\vert }g\,d\mu (\lambda _{1})d\mu (\lambda
_{2}) \\
&=&C\sum_{k\in \mathbb{Z}}\int_{-1}^{1}\gamma _{j,k}^{\alpha }(\lambda
_{1})P_{\left\vert \lambda _{1}\right\vert }f\,d\mu (\lambda
_{1})\int_{-1}^{1}e^{i\pi k\lambda _{2}}P_{\left\vert \lambda
_{2}\right\vert }g\,d\mu (\lambda _{2}).
\end{eqnarray*}
Applying Cauchy-Schwartz's inequality and Lemma \ref{restriction}, we have
\begin{eqnarray}
\left\Vert T_{j}^{\alpha }(f,g)\right\Vert _{1} &\leq &C\sum_{k\in \mathbb{Z}
}\left\Vert \int_{-1}^{1}\gamma _{j,k}^{\alpha }(\lambda _{1})P_{\left\vert
\lambda _{1}\right\vert }f\,d\mu (\lambda _{1})\right\Vert _{2}\left\Vert
\int_{-1}^{1}e^{i\pi k\lambda _{2}}P_{\left\vert \lambda _{2}\right\vert
}g\,d\mu (\lambda _{2})\right\Vert _{2}  \label{ff} \\
&\leq &C\sum_{k\in \mathbb{Z}}(1+\left\vert k\right\vert )^{-1-\delta}\left(
\sup_{s\in \lbrack -1,1]}\left\vert \gamma _{j,k}^{\alpha }(s)\right\vert
\left( 1+\left\vert k\right\vert \right) ^{1+\delta}\right) \left\Vert
f\right\Vert _{p_{1}}\left\Vert g\right\Vert _{p_{2}}  \notag \\
&\leq &C2^{-j(\alpha -\delta)}\left\Vert f\right\Vert _{p_{1}}\left\Vert
g\right\Vert _{p_{2}}.  \notag
\end{eqnarray}
Then, using H\"{o}lder's inequality and (\ref{f1}), it follows that
\begin{eqnarray*}
\left\Vert T_{j}^{1}(f_{1},g_{1})\right\Vert _{L^{p}(B_{j}(\xi ,\frac{1}{4}
))} &\leq &2^{j(1+\gamma )Q\left( \frac{1}{p}-1\right) }\left\Vert
T_{j}^{1}(f_{1},g_{1})\right\Vert _{L^{1}(B_{j}(\xi ,\frac{1}{4}))} \\
&=&2^{j(1+\gamma )Q\left( \frac{1}{p}-1\right) }\left\Vert T_{j}^{\alpha
}(f_{1},g_{1})\right\Vert _{L^{1}(B_{j}(\xi ,\frac{1}{4}))} \\
&\leq &C2^{-j(\alpha -\delta)}2^{j(1+\gamma )Q\left( \frac{1}{ p}-1\right)
}\left\Vert f_{1}\right\Vert _{p_{1}}\left\Vert g_{1}\right\Vert _{p_{2}} \\
&\leq &C2^{-j(\alpha -\delta)}2^{j(1+\gamma )Q\left( \frac{1}{ p}-1\right)
}\left\Vert f\right\Vert _{L^{p_{1}}(B_{j}(\xi ,\frac{3}{4} ))}\left\Vert
g\right\Vert _{L^{p_{2}}(B_{j}(\xi ,\frac{3}{4}))}.
\end{eqnarray*}
Taking the $L^{p}$ norm with respect to $\xi $ yields that
\begin{equation}
\left\Vert T_{j}^{1}(f_{1},g_{1})\right\Vert _{p}\leq C2^{-j(\alpha
-\delta)} 2^{j(1+\gamma )Q\left( \frac{1}{p}-1\right) }\left\Vert
f\right\Vert _{p_{1}}\left\Vert g\right\Vert _{p_{2}}.  \label{f5}
\end{equation}
Combining (\ref{f3}), (\ref{f2}) and (\ref{f5}), we conclude that
\begin{equation*}
\left\Vert T_{j}^{1}(f,g)\right\Vert _{p}\leq C2^{-j(\alpha -\delta)}
2^{j(1+\gamma )Q\left( \frac{1}{p}-1\right) }\left\Vert f\right\Vert
_{p_{1}}\left\Vert g\right\Vert _{p_{2}}.
\end{equation*}
Thus, whenever $\alpha >Q\left( \frac{1}{p}-1\right) $, we can choose $
\gamma, \delta >0$ such that
\begin{equation*}
\alpha >Q(1+\gamma )\left( \frac{1}{p}-1\right) +\delta,
\end{equation*}
which implies that there exists an $\varepsilon >0$ such that
\begin{equation*}
\left\Vert T_{j}^{1}\right\Vert _{L^{1}\times L^{p_{2}}\rightarrow
L^{p}}\leq 2^{-\varepsilon j}.
\end{equation*}
The proof of Theorem \ref{mainTh} is completed.
\end{proof}

\section{Boundedness of $S^{\protect\alpha}$ for particular points}

In this section, we investigate the boundedness of $S^{\alpha }$ for some
specific triples of points $(p_{1},p_{2},p)$.

\subsection{The point $(1,\infty ,1)$}

\begin{theorem}
\label{1infty} If $\alpha >\frac{Q}{2}$, then $S^{\alpha }$ is bounded from $
L^{1}(\mathbb{H}^{n})\times L^{\infty }(\mathbb{H}^{n})$ to $L^{1}(\mathbb{H}
^{n})$.
\end{theorem}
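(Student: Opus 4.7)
The plan is to adapt the dyadic decomposition strategy from the proof of Theorem \ref{mainTh}. Write $S^\alpha = \sum_{j \ge 0} T_j^\alpha$ with the same $T_j^\alpha$ and multiplier $\varphi_j^\alpha$ as in Section 4; it suffices to prove that for $\alpha > Q/2$ there exists $\varepsilon > 0$ with $\|T_j^\alpha\|_{L^1 \times L^\infty \to L^1} \le C\, 2^{-\varepsilon j}$ for every $j \ge 0$. Following Theorem \ref{mainTh}, split the kernel $K_j^\alpha$ into four pieces $K_j^1, K_j^2, K_j^3, K_j^4$ via cutoffs $\chi_{B_j}$ and $\chi_{B_j^c}$ at radius $2^{j(1+\gamma)}$.

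For the three pieces $K_j^2, K_j^3, K_j^4$ where at least one variable lies in the far region $B_j^c$, the integration-by-parts estimates of Theorem \ref{mainTh}, based on the decay bound $|R_t^{2m+1}(\omega)| \le Ct^{n+1}(1+t^{1/2}|\omega|)^{-2m}$, yield pointwise kernel estimates whose $L^1(\mathbb{H}^n \times \mathbb{H}^n)$ norm is $\le C\, 2^{-\varepsilon j}$ when $m$ is chosen large enough. Since the $L^1 \times L^\infty \to L^1$ norm of a bilinear operator with kernel $K$ is dominated by $\|K\|_{L^1(\mathbb{H}^n \times \mathbb{H}^n)}$ (using $\|g\|_\infty$ pointwise and Fubini in $x$ and $\omega_1$), these three pieces go through as in the proof of Theorem \ref{mainTh} with only cosmetic changes.

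The main novelty is the near-near piece $T_j^1(f_1, g_1)$, where the Fourier-series argument of \eqref{ff} fails because Lemma \ref{restriction} requires $p_2 \le 2$. My plan is to exploit the factorization $(1-\lambda_1-\lambda_2)^\alpha = (1-\lambda_1)^\alpha (1 - \lambda_2/(1-\lambda_1))^\alpha$ to rewrite $T_j^\alpha(f,g)(x)$ as an integral in $\lambda_1$ of $P_{\lambda_1} f(x)$ times a dyadic piece of the linear Bochner-Riesz mean $S^\alpha_{1-\lambda_1}(g)(x)$. Since $\alpha > Q/2 > (Q-1)/2$, Mauceri's theorem supplies $L^\infty$-boundedness of this linear factor on $g$, uniformly in the level $1-\lambda_1$, and the dyadic piece at scale $2^{-j}$ contributes an extra decay $2^{-\varepsilon' j}$. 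Dualizing against $h \in L^\infty$ and using self-adjointness of $P_{\lambda_1}$, one converts the $L^1$-bound into the $L^\infty$-norm of a weighted spectral integral in $\lambda_1$; applying Lemma \ref{restriction} to the $f$-factor (as in the analysis of $T_j^1(f_1, g_1)$ from Theorem \ref{mainTh}) together with the local-to-global argument on balls $B_j(\xi, \tfrac14)$ should yield the claimed bound.

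The main obstacle is balancing the Lemma \ref{restriction} gain $2^{-j/2}$ on $f \in L^1$ against the Mauceri threshold $(Q-1)/2$ on the $g$-factor; producing exactly the condition $\alpha > Q/2$ requires handling the $\lambda_1$-dependent coefficient $S^\alpha_{1-\lambda_1}(g)$ via a careful dualization rather than a single spectral multiplier bound, and this is the step where the geometry of the bilinear multiplier on the Heisenberg group enters in an essential way.
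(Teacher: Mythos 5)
Your treatment of the pieces $T_j^2$, $T_j^3$, $T_j^4$ and of the far contributions to $T_j^1$ matches the paper and is fine: those estimates from the proof of Theorem \ref{mainTh} hold for every $\alpha>0$ and carry over verbatim. The problem is the near--near piece $T_j^1(f_1,g_1)$, where your proposal both misdiagnoses the difficulty and leaves the decisive step unproved. First, the diagnosis: the Fourier--series argument of \eqref{ff} does \emph{not} fail here. It yields $\Vert T_j^{\alpha}(f,g)\Vert_1\le C2^{-j(\alpha-\delta)}\Vert f\Vert_1\Vert g\Vert_2$, and the paper's entire proof of Theorem \ref{1infty} consists of applying exactly this bound to $f_1,g_1$ and then using that $g_1$ is supported in a ball of radius $\sim 2^{j(1+\gamma)}$, so that H\"older gives $\Vert g_1\Vert_2\le C2^{j(1+\gamma)Q/2}\Vert g\Vert_{L^{\infty}(B_j(\xi,\frac34))}$. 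The loss $2^{j(1+\gamma)Q/2}$ is precisely what produces the threshold $\alpha>Q/2$ after the local-to-global averaging in $\xi$. You had all the ingredients on the table and did not need any new mechanism.

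Second, the mechanism you propose instead does not close. The factorization $(1-\lambda_1-\lambda_2)_+^{\alpha}=(1-\lambda_1)_+^{\alpha}\bigl(1-\tfrac{\lambda_2}{1-\lambda_1}\bigr)_+^{\alpha}$ leads to $\int_0^1(1-\lambda_1)^{\alpha}P_{\lambda_1}f(x)\,S^{\alpha}_{1-\lambda_1}g(x)\,d\mu(\lambda_1)$, and the naive pointwise estimate requires controlling $\int_0^1\vert P_{\lambda_1}f(x)\vert\,d\mu(\lambda_1)$, which is not dominated by any $L^1$ quantity in $f$. The dualization you sketch does not repair this: after moving $P_{\lambda_1}$ onto $h\cdot S^{\alpha}_{1-\lambda_1}g$ the integrand still depends on $\lambda_1$ through the $g$-factor, so you do not obtain a single spectral multiplier to which Lemma \ref{restriction} or an $L^{\infty}$ bound applies; you acknowledge this yourself as the ``main obstacle,'' which means the proof is not complete at its only nontrivial point. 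Two further quantitative claims are also unsupported: (i) on the dyadic shell $1-\lambda_1-\lambda_2\sim 2^{-j}$ the induced localization of the linear Riesz mean in $\lambda_2$ is at scale $2^{-j}/(1-\lambda_1)$, which degenerates to $O(1)$ when $1-\lambda_1\sim 2^{-j}$, so the advertised uniform extra decay $2^{-\varepsilon' j}$ from ``a dyadic piece of the linear mean'' is not available; (ii) Lemma \ref{restriction} applied to the $f$-factor over the full interval $\lambda_1\in[0,1]$ gives $((b-a)b^n)^{1/2}=1$ and hence no $2^{-j/2}$ gain. I recommend replacing this entire step by the paper's two-line argument: apply \eqref{ff} with $(p_1,p_2)=(1,2)$ to $(f_1,g_1)$, convert $\Vert g_1\Vert_2$ to $2^{j(1+\gamma)Q/2}\Vert g\Vert_{\infty}$ by H\"older on $B_j(\xi,\frac34)$, take the $L^1$ norm in $\xi$, and choose $\gamma,\delta$ with $\alpha>(1+\gamma)\frac{Q}{2}+\delta$.
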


\begin{proof}
\quad We keep the notations in Section 4. Note that (\ref{Tj4}), (\ref{Tj3}), (\ref{f3}) and (\ref{f2}) hold for any $\alpha >0$, the proof of Theorem
\ref{mainTh} is valid apart from the estimate of $T_{j}^{1}(f_{1},g_{1})$.
According to (\ref{ff}), for any $0< \delta < \alpha$,
\begin{equation*}
\left\Vert T_{j}^{\alpha }(f,g)\right\Vert _{1}\leq C2^{-j(\alpha -\delta
)}\left\Vert f\right\Vert _{1}\left\Vert g\right\Vert _{2},
\end{equation*}
we have
\begin{eqnarray*}
\left\Vert T_{j}^{1}(f_{1},g_{1})\right\Vert _{L^{1}(B_{j}(\xi ,\frac{1}{4}
))} &=&\left\Vert T_{j}^{\alpha }(f_{1},g_{1})\right\Vert _{L^{1}(B_{j}(\xi ,
\frac{1 }{4}))} \\
&\leq &C2^{-j(\alpha -\delta)}\left\Vert f\right\Vert _{L^{1}(B_{j}(\xi ,
\frac{3}{4} ))}\left\Vert g\right\Vert _{L^{2}(B_{j}(\xi ,\frac{3}{4}))} \\
&\leq &C2^{-j(\alpha -\delta)}2^{j(1+\gamma )\frac{Q}{2}}\left\Vert
f\right\Vert _{L^{1}(B_{j}(\xi ,\frac{3}{4}))}\left\Vert g\right\Vert
_{L^{\infty }(B_{j}(\xi ,\frac{1}{4}))}.
\end{eqnarray*}
It follows that
\begin{equation*}
\left\Vert T_{j}^{1}(f_{1},g_{1})\right\Vert _{1}\leq C2^{-j(\alpha -\delta
)}2^{j(1+\gamma )\frac{Q}{2}}\left\Vert f\right\Vert _{1}\left\Vert
g\right\Vert _{\infty }.
\end{equation*}%
Thus, when $\alpha >\frac{Q}{2}$, we can choose $\gamma, \delta >0$ such
that $\alpha >\frac{(1+\gamma )Q}{2}+\delta $, which yields that there
exists $\varepsilon >0$ such that
\begin{equation*}
\left\Vert T_{j}^{1}(f_{1},g_{1})\right\Vert _{1}\leq C2^{-\varepsilon
j}\left\Vert f\right\Vert _{1}\left\Vert g\right\Vert _{\infty }.
\end{equation*}
The proof is completed.
\end{proof}

\subsection{The point $(\infty ,\infty ,\infty )$}

\begin{theorem}
\label{infty}If $\alpha >Q- \frac{1}{2}$, then $S^{\alpha }$ is bounded from
$L^{\infty }( \mathbb{H}^{n})\times L^{\infty }(\mathbb{H}^{n})$ into $
L^{\infty }(\mathbb{\ H}^{n})$.
\end{theorem}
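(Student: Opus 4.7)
The plan is to follow the dyadic and spatial decomposition scheme used in the proofs of Theorems \ref{mainTh} and \ref{1infty}. Write $S^\alpha = \sum_{j\ge 0} T_j^\alpha$ and for each $j$ perform the spatial cutoff $T_j^\alpha = T_j^1 + T_j^2 + T_j^3 + T_j^4$ with respect to the ball $B_j = \{|\omega| \le 2^{j(1+\gamma)}\}$. As observed in the proof of Theorem \ref{1infty}, the bounds (\ref{Tj4}), (\ref{Tj3}), (\ref{f3}), (\ref{f2}), and the analogue for $T_j^1(f_2, g_1)$, hold for any $\alpha > 0$ once $m$ and $\gamma$ are chosen appropriately, since they rely only on pointwise kernel decay together with H\"{o}lder's and Young's inequalities. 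The task therefore reduces to showing
\[\|T_j^1(f_1, g_1)\|_{L^\infty(B_j(\xi, 1/4))} \le C\, 2^{-\varepsilon j}\, \|f\|_\infty \|g\|_\infty\]
for some $\varepsilon > 0$, where $f_1, g_1 \in L^\infty$ are supported in $B_j(\xi, 3/4)$.

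For this core estimate, apply the Fourier series expansion (\ref{ff}) to write $T_j^\alpha(f_1, g_1) = C \sum_{k\in\mathbb{Z}} A_k f_1 \cdot B_k g_1$, with $A_k = \int \gamma_{j,k}^\alpha(\lambda) P_{|\lambda|}\, d\mu$, $B_k = \int e^{i\pi k\lambda} P_{|\lambda|}\, d\mu$, and $\sup|\gamma_{j,k}^\alpha|(1+|k|)^{1+\delta} \le C\, 2^{-j(\alpha - \delta)}$ for any $0<\delta<\alpha$. Following the strategy of Theorem \ref{1infty}, combine (\ref{ff}) with Lemma \ref{restriction} at $p = 2$ together with the H\"{o}lder bounds $\|f_1\|_2, \|g_1\|_2 \le |B_j|^{1/2} \|f\|_\infty, \|g\|_\infty$ to derive a local $L^1$ estimate
\[\|T_j^\alpha(f_1, g_1)\|_{L^1(B_j(\xi, 1/4))} \le C\, 2^{-j(\alpha - \delta)}\, |B_j|\, \|f\|_\infty \|g\|_\infty.\]
To upgrade to $L^\infty$, apply a Bernstein-type principle: since the output is spectrally localized through the support of $\varphi_j^\alpha$ (to $\mathcal{L}$-spectrum in $[0, 1]$), its $L^\infty$ norm on the ball $B_j(\xi, 1/4)$ is controlled by $|B_j|^{-1/2}$ times a suitable $L^2$ quantity on that ball. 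Together with a second use of Lemma \ref{restriction} to extract a $2^{-j/2}$ gain from the narrow effective support of $\varphi_j^\alpha$ near the bilinear sphere $\lambda_1+\lambda_2=1$, this produces
\[\|T_j^1(f_1, g_1)\|_\infty \le C\, 2^{-j(\alpha - (1+\gamma)(Q - 1/2) - \delta)}\, \|f\|_\infty \|g\|_\infty.\]
When $\alpha > Q - 1/2$, one chooses $\gamma, \delta > 0$ small enough so that the exponent is negative, giving the required $2^{-\varepsilon j}$ decay.

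The main obstacle is the conversion from the local $L^1$ (or $L^2$) bound to the desired $L^\infty$ bound. In Theorem \ref{1infty}, only an $L^1$ estimate is required and a single H\"{o}lder upgrade on $g_1$ is sufficient; here both the inputs and the output live in $L^\infty$, so the two input upgrades $\|f_1\|_2, \|g_1\|_2 \le |B_j|^{1/2}\|\cdot\|_\infty$ must be compensated by both the $2^{-j/2}$ restriction-theorem gain and a Bernstein-type reverse-H\"{o}lder inequality for spectrally localized functions on the Heisenberg group. Balancing these three factors is precisely what pins down the sharp index $Q - 1/2$, in place of the larger value $Q$ that a naive iteration of the $(1, \infty, 1)$ estimate would produce.
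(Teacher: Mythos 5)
Your reduction to the single term $T_j^1(f_1,g_1)$ is exactly what the paper does, but the core of your argument has a genuine gap: the ``Bernstein-type principle'' you invoke to upgrade the local $L^1$ bound to an $L^\infty$ bound presupposes that $T_j^\alpha(f_1,g_1)$ is spectrally localized for $\mathcal{L}$ in $[0,1]$. On the Heisenberg group this is false: the product $P_{\lambda_1}f\cdot P_{\lambda_2}g$ is not an eigenfunction and its $\mathcal{L}$-spectrum is not confined near $\lambda_1+\lambda_2$, because the spectral decomposition does not turn products into convolutions of spectra --- this is precisely one of the ``essential differences'' between $\mathbb{R}^n$ and $\mathbb{H}^n$ highlighted in the introduction (only the central frequency $\lambda_1+\lambda_2$ is preserved, not the full joint spectrum in $(k,\lambda)$). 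So there is no reverse-H\"older inequality $\|F\|_{L^\infty(B)}\lesssim |B|^{-1/2}\|F\|_{L^2(B)}$ available for the output, and even if spectral localization at unit scale did hold it would only give a global bound $\|F\|_\infty\lesssim\|F\|_2$, not the local one with the $|B_j|^{-1/2}$ normalization you need. The second ingredient, ``a second use of Lemma \ref{restriction} to extract a $2^{-j/2}$ gain,'' is also unsubstantiated: at $p=2$ Lemma \ref{restriction} is the trivial estimate with no gain from the length of the spectral interval, and the Fourier coefficient $\gamma_{j,k}^\alpha(\lambda_1)$ is supported on essentially all of $[0,1]$ in $\lambda_1$; the $2^{-j}$-narrowness lives only in the joint variable $\lambda_1+\lambda_2$, which the factorized expansion (\ref{ff}) can no longer see.

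The paper avoids both problems by never passing through an $L^1$ or $L^2$ bound on the output. It writes $T_j^\alpha(f,g)$ via the twisted-convolution representation, estimates pointwise $\|f^{\lambda_1}\ast_{\lambda_1}\varphi_k^{\lambda_1}\|_\infty\le\|f^{\lambda_1}\|_2\|\varphi_k^{\lambda_1}\|_2$ together with (\ref{phi2}), and then applies Cauchy--Schwarz in $(\lambda_1,\lambda_2)$ with weights $|\lambda_i|^{\pm\delta}$; the factor $2^{-j(\alpha+\frac12)}$ comes out of the joint $L^2$ norm of $\varphi_j^\alpha$ over its $2^{-j}$-thin support, which is exactly the gain you were hoping to recover. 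The support of $f_1,g_1$ then enters through the estimate $\int_{|\lambda|\le1}\|f_1^{\lambda}\|_2^2|\lambda|^{-\delta}\,d\lambda\lesssim 2^{j(1+\gamma)(Q+2\delta)}\|f\|_\infty^2$, obtained by splitting into $|\lambda|\gtrless 2^{-2j(1+\gamma)}$ and using Plancherel on one piece and the trivial size bound on the other. Your numerology happens to land on the right threshold $Q-\tfrac12$, but the two steps that produce it are not justified and, as written, cannot be.
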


\begin{proof}
\quad We still keep the notion in Section 4. To prove this theorem, it
suffices to estimate $T_{j}^{1}(f_{1},g_{1})$. Since that
\begin{equation*}
P_{\lambda }f(z,t)=\sum_{k=0}^{\infty }(2k+n)^{-n-1}f\ast (\widetilde{e}
_{k}^{\lambda }+\widetilde{e}_{k}^{-\lambda })(z,t)\text{, \ \ }\lambda >0,
\end{equation*}
and
\begin{equation*}
f\ast e_{k}^{\lambda }=e^{-i\lambda t}f^{\lambda }\ast _{\lambda }\varphi
_{k}^{\lambda },
\end{equation*}
we can write $T_{j}^{\alpha }(f,g)$ as
\begin{eqnarray*}
T_{j}^{\alpha }(f,g)(z,t) &=&\int_{0}^{\infty }\int_{0}^{\infty }\varphi
_{j}^{\alpha }(\lambda _{1},\lambda _{2})\sum_{k=0}^{\infty
}(2k+n)^{-n-1}f\ast (\widetilde{e}_{k}^{\lambda _{1}}+\widetilde{e}
_{k}^{-\lambda _{1}})(z,t) \\
&&\times \sum_{l=0}^{\infty }(2l+n)^{-n-1}g\ast (\widetilde{e}_{l}^{\lambda
_{2}}+\widetilde{e}_{l}^{-\lambda _{2}})(z,t)\,d\mu (\lambda _{1})d\mu
(\lambda _{2}) \\
&=&C\int_{\mathbb{R}}\int_{\mathbb{R} }\sum_{k=0}^{\infty
}\sum_{l=0}^{\infty }e^{-i(\lambda _{1}+\lambda _{2})t}\varphi _{j}^{\alpha
}\left( (2k+n)\left\vert \lambda _{1}\right\vert ,(2l+n)\left\vert \lambda
_{2}\right\vert \right) \\
&&\times \left( f^{\lambda _{1}}\ast _{\lambda _{1}}\varphi _{k}^{\lambda
_{1}}\right) (z)\left( g^{\lambda _{2}}\ast _{\lambda _{2}}\varphi
_{l}^{\lambda _{2}}\right) (z)\left\vert \lambda _{1}\right\vert
^{n}\left\vert \lambda _{2}\right\vert ^{n}\,d\lambda _{1}d\lambda _{2}.
\end{eqnarray*}
Using again (\ref{phi2}), for $0<\delta<1$, we get
\begin{eqnarray*}
&&\left\Vert T_{j}^{\alpha }(f,g)\right\Vert _{\infty } \\
&\leq &C\int_{\mathbb{R}}\int_{\mathbb{R}}\sum_{k=0}^{\infty
}\sum_{l=0}^{\infty } \left\vert \varphi _{j}^{\alpha
}\left((2k+n)\left\vert \lambda _{1}\right\vert , (2l+n)\left\vert
\lambda_{2}\right\vert \right) \right\vert \\
&&\times \left\Vert f^{\lambda _{1}}\ast _{\lambda _{1}}\varphi
_{k}^{\lambda _{1}}\right\Vert _{\infty }\left\Vert g^{\lambda _{2}}\ast
_{\lambda _{2}}\varphi _{l}^{\lambda _{2}}\right\Vert _{\infty }\left\vert
\lambda _{1}\right\vert ^{n}\left\vert \lambda _{2}\right\vert
^{n}\,d\lambda _{1}d\lambda _{2} \\
&\leq &C\int_{\mathbb{R}}\int_{\mathbb{R}}\sum_{k=0}^{\infty
}\sum_{l=0}^{\infty } \left\vert \varphi _{j}^{\alpha
}\left((2k+n)\left\vert \lambda _{1}\right\vert , (2l+n)\left\vert
\lambda_{2}\right\vert \right) \right\vert \\
&&\times \left\Vert f^{\lambda _{1}}\right\Vert _{2}\left\Vert \varphi
_{k}^{\lambda _{1}}\right\Vert _{2}\left\Vert g^{\lambda _{2}}\right\Vert
_{2}\left\Vert \varphi _{l}^{\lambda _{2}}\right\Vert _{2}\left\vert \lambda
_{1}\right\vert ^{n}\left\vert \lambda _{2}\right\vert ^{n}\,d\lambda
_{1}d\lambda _{2} \\
&\leq &C\int_{\mathbb{R}}\int_{\mathbb{R}}\sum_{k=0}^{\infty
}\sum_{l=0}^{\infty } \left\vert \varphi _{j}^{\alpha
}\left((2k+n)\left\vert \lambda _{1}\right\vert , (2l+n)\left\vert
\lambda_{2}\right\vert \right) \right\vert \left\vert \lambda _{1}
\right\vert ^{\frac{n}{2}}k^{\frac{n-1}{2}}\left\vert \lambda
_{2}\right\vert ^{\frac{n}{2}}l^{\frac{n-1}{2}} \\
&&\times \left\Vert f^{\lambda _{1}}\right\Vert _{2} \left\Vert g^{\lambda
_{2}}\right\Vert _{2}\,d\lambda _{1}d\lambda _{2} \\
&\leq &C \Bigg( \int_{\mathbb{R}}\int_{\mathbb{R}} \bigg( \sum_{k=0}^{\infty
}\sum_{l=0}^{\infty } \left\vert \varphi _{j}^{\alpha
}\left((2k+n)\left\vert \lambda _{1}\right\vert , (2l+n)\left\vert
\lambda_{2}\right\vert \right) \right\vert \left\vert \lambda _{1}
\right\vert ^{\frac{n+\delta}{2}}k^{\frac{n-1}{2}}\left\vert \lambda
_{2}\right\vert ^{\frac{n+\delta}{2}}l^{\frac{n-1}{2}}\bigg)^2\, d\lambda
_{1}d\lambda _{2} \Bigg)^{\frac{1}{2}} \\
&&\times \Bigg( \int_{\left\vert \lambda _{1}\right\vert \leq 1}
\int_{\left\vert \lambda _{2}\right\vert \leq 1} \left\Vert
f^{\lambda_{1}}\right\Vert ^2_{2} \left\Vert g^{\lambda _{2}}\right\Vert
^2_{2} \left\vert \lambda _{1} \right\vert ^{-\delta} \left\vert \lambda
_{2} \right\vert ^{-\delta}\, d\lambda _{1}d\lambda _{2} \Bigg)^{\frac{1}{2}}
\\
&\leq &C \sum_{k=0}^{\infty }\sum_{l=0}^{\infty } k^{\frac{n-1}{2}}l^{\frac{
n-1}{2}} \bigg( \int_{\mathbb{R}}\int_{\mathbb{R}} \left\vert \varphi
_{j}^{\alpha } \left((2k+n)\left\vert \lambda _{1}\right\vert ,
(2l+n)\left\vert \lambda_{2}\right\vert \right) \right\vert^2 \left\vert
\lambda _{1} \right\vert ^{n+\delta} \left\vert \lambda_{2}\right\vert
^{n+\delta}\, d\lambda _{1}d\lambda _{2} \bigg)^{\frac{1}{2}} \\
&&\times \Bigg( \int_{\left\vert \lambda _{1}\right\vert \leq 1} \left\Vert
f^{\lambda_{1}}\right\Vert ^2_{2} \left\vert \lambda _{1} \right\vert
^{-\delta} \, d\lambda _{1} \Bigg)^{\frac{1}{2}} \Bigg( \int_{\left\vert
\lambda _{2}\right\vert \leq 1} \left\Vert g^{\lambda _{2}}\right\Vert
^2_{2} \left\vert \lambda _{2} \right\vert ^{-\delta}\, d\lambda _{2} \Bigg)
^{\frac{1}{2}} \\
&\leq &C \sum_{k=0}^{\infty }\sum_{l=0}^{\infty } k^{-1-\frac{\delta}{2}
}l^{-1-\frac{\delta}{2}} \bigg( \int_{\mathbb{R}}\int_{\mathbb{R}}
\left\vert \varphi _{j}^{\alpha } \left(\left\vert \lambda _{1}\right\vert ,
\left\vert \lambda_{2}\right\vert \right) \right\vert^2 \left\vert \lambda
_{1} \right\vert ^{n+\delta} \left\vert \lambda_{2}\right\vert ^{n+\delta}\,
d\lambda _{1}d\lambda _{2} \bigg)^{\frac{1}{2}} \\
&&\times \Bigg( \int_{\left\vert \lambda _{1}\right\vert \leq 1} \left\Vert
f^{\lambda_{1}}\right\Vert ^2_{2} \left\vert \lambda _{1} \right\vert
^{-\delta} \, d\lambda _{1} \Bigg)^{\frac{1}{2}} \Bigg( \int_{\left\vert
\lambda _{2}\right\vert \leq 1} \left\Vert g^{\lambda _{2}}\right\Vert
^2_{2} \left\vert \lambda _{2} \right\vert ^{-\delta}\, d\lambda _{2} \Bigg)
^{\frac{1}{2}} \\
&\leq &C2^{-j(\alpha +\frac{1}{2})}\Bigg( \int_{\left\vert \lambda
_{1}\right\vert \leq 1} \left\Vert f^{\lambda_{1}}\right\Vert ^2_{2}
\left\vert \lambda _{1} \right\vert ^{-\delta} \, d\lambda _{1} \Bigg)^{
\frac{1}{2}} \Bigg( \int_{\left\vert \lambda _{2}\right\vert \leq 1}
\left\Vert g^{\lambda _{2}}\right\Vert ^2_{2} \left\vert \lambda _{2}
\right\vert ^{-\delta}\, d\lambda _{2} \Bigg)^{\frac{1}{2}}.
\end{eqnarray*}
Because
\begin{equation*}
T_{j}^{1}(f_{1},g_{1})(\omega )=T_{j}^{\alpha }(f_{1},g_{1})(\omega ),\quad
\omega \in B_{j}(\xi ,\frac{1}{4}),
\end{equation*}
we have
\begin{eqnarray}
&&\left\Vert T_{j}^{1}(f_{1},g_{1})\right\Vert _{L^{\infty }(B_{j}(\xi \,
\frac{1}{4}))}  \label{m0} \\
&=&\left\Vert T_{j}^{\alpha }(f_{1},g_{1})\right\Vert _{L^{\infty
}(B_{j}(\xi \, \frac{1}{4}))}  \notag \\
&\leq &C2^{-j(\alpha +\frac{1}{2})}\Bigg( \int_{\left\vert \lambda
_{1}\right\vert \leq 1} \left\Vert f_1^{\lambda_{1}}\right\Vert ^2_{2}
\left\vert \lambda _{1} \right\vert ^{-\delta} \, d\lambda _{1} \Bigg)^{
\frac{1}{2}} \Bigg( \int_{\left\vert \lambda _{2}\right\vert \leq 1}
\left\Vert g_1^{\lambda _{2}}\right\Vert ^2_{2} \left\vert \lambda _{2}
\right\vert ^{-\delta}\, d\lambda _{2} \Bigg)^{\frac{1}{2}}.  \notag
\end{eqnarray}
Let us consider the integral about $\lambda _{1}$. Note that
\begin{eqnarray*}
\left\Vert f_{1}^{\lambda _{1}}\right\Vert _{2} &\leq &C2^{nj(1+\gamma)}
\left\Vert f_{1}^{\lambda _{1}}\right\Vert _{\infty} \\
&\leq &C2^{j(1+\gamma )\left( \frac{Q}{2}+1\right) } \left\Vert f
\right\Vert_{L^{\infty }(B_{j}(\xi ,\frac{3}{4}))},
\end{eqnarray*}
we obtain
\begin{eqnarray*}
&&\int_{\left\vert \lambda _{1}\right\vert \leq 1} \left\Vert
f_1^{\lambda_{1}} \right\Vert ^2_{2} \left\vert \lambda _{1} \right\vert
^{-\delta}\,d\lambda _{1} \\
&=&\int_{2^{-2j(1+\gamma )}\leq \left\vert \lambda _{1}\right\vert \leq
1}\left\Vert f_1^{\lambda_{1}}\right\Vert ^2_{2} \left\vert \lambda _{1}
\right\vert ^{-\delta}\, d\lambda _{1}+ \int_{\left\vert \lambda
_{1}\right\vert \leq 2^{-2j(1+\gamma )}}\left\Vert
f_1^{\lambda_{1}}\right\Vert ^2_{2} \left\vert \lambda _{1} \right\vert
^{-\delta}\, d\lambda _{1} \\
&\leq &2^{2\delta j(1+\gamma )}\left\Vert f_{1} \right\Vert _{2}^{2} +
C2^{j(1+\gamma )\left(Q+2\right) } \left\Vert f \right\Vert ^2_{L^{\infty
}(B_{j}(\xi ,\frac{3}{4}))} \int_{\left\vert \lambda_{1}\right\vert \leq
2^{-2j(1+\gamma )}} \left\vert \lambda _{1} \right\vert ^{-\delta}\,
d\lambda _{1} \\
&\leq &C2^{j(1+\gamma )\left(Q+2\delta \right)} \left\Vert f \right\Vert
^2_{L^{\infty }(B_{j}(\xi ,\frac{3}{4}))}.
\end{eqnarray*}
In the same way, we have
\begin{equation}
\int_{\left\vert \lambda _{1}\right\vert \leq 1} \left\Vert
g_1^{\lambda_{2}} \right\Vert ^2_{2} \left\vert \lambda _{2} \right\vert
^{-\delta}\,d\lambda _{2} \leq C2^{j(1+\gamma )\left(Q+2\delta \right)}
\left\Vert g \right\Vert ^2_{L^{\infty }(B_{j}(\xi ,\frac{3}{4}))}.
\label{m1}
\end{equation}
From (\ref{m0}) and above estimates, we get
\begin{equation*}
\left\Vert T_{j}^{1}(f_{1},g_{1})\right\Vert _{L^{\infty }(B_{j}(\xi ,\frac{1
}{4} ))}\leq C2^{-j(\alpha +\frac{1}{2})} 2^{j(1+\gamma )\left(Q+2\delta
\right)} \left\Vert f\right\Vert _{L^{\infty}(B_{j}(\xi ,\frac{3}{4}))}
\left\Vert g\right\Vert _{L^{\infty }(B_{j}(\xi ,\frac{3}{4}))}.
\end{equation*}
It follows that
\begin{equation*}
\left\Vert T_{j}^{1}(f_{1},g_{1})\right\Vert _{L^{\infty }} \leq
C2^{-j(\alpha +\frac{1}{2})} 2^{j(1+\gamma )\left(Q+2\delta \right)}
\left\Vert f\right\Vert _{L^{\infty }}\left\Vert g\right\Vert _{L^{\infty }}.
\end{equation*}
Thus, whenever $\alpha >Q- \frac{1}{2}$, we can choose $\gamma, \delta >0$
such that $\alpha >(1+\gamma )(Q+2\delta)- \frac{1}{2}$, which implies there
exists an $\varepsilon >0$ such that
\begin{equation*}
\left\Vert T_{j}^{1}(f_{1},g_{1})\right\Vert _{L^{\infty }}\leq
C2^{-\varepsilon j}\left\Vert f\right\Vert _{L^{\infty }}\left\Vert
g\right\Vert _{L^{\infty }}.
\end{equation*}
The proof of Theorem \ref{infty} is completed.
\end{proof}

\subsection{The point $(2,\infty ,2)$}

\begin{theorem}
\label{Theorem212} If $\alpha >\frac{Q-1}{2}$, then $S^{\alpha }$ is bounded
from $L^{2}(\mathbb{H}^{n})\times L^{\infty }(\mathbb{H}^{n})$ to $L^{2}(
\mathbb{H}^{n})$.
\end{theorem}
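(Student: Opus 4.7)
The plan is to follow the dyadic framework used in Theorems \ref{mainTh}, \ref{1infty}, and \ref{infty}. I would write $S^\alpha = \sum_{j \geq 0} T_j^\alpha$ and split each $T_j^\alpha$ kernel into $K_j^1 + K_j^2 + K_j^3 + K_j^4$ via the spatial cutoff by $B_j = \{\omega : |\omega| \leq 2^{j(1+\gamma)}\}$. The estimates $\|T_j^l\|_{L^{p_1} \times L^{p_2} \to L^p} \leq C 2^{-\varepsilon j}$ for $l = 2, 3, 4$ derived in Section~4 require only $\alpha > 0$ and carry over verbatim to the present triple $(p_1, p_2, p) = (2, \infty, 2)$, so the task reduces to showing $\|T_j^1(f, g)\|_2 \leq C 2^{-\varepsilon j} \|f\|_2 \|g\|_\infty$ for some $\varepsilon > 0$.

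For this, I would use the localization $T_j^1(f_1, g_1) = T_j^\alpha(f_1, g_1)$ on $B_j(\xi, 1/4)$ and apply Plancherel in the $t$-variable:
$$
\|T_j^\alpha(f_1, g_1)\|_2^2 = c \iint \bigl|T_j^\alpha(f_1, g_1)^\mu(z)\bigr|^2\, dz\, d\mu,
$$
where the inverse Fourier transform in $t$ of the representation used in the proof of Theorem~\ref{infty} yields
$$
T_j^\alpha(f_1, g_1)^\mu(z) = c' \int \sum_{k, l} \varphi_j^\alpha\bigl((2k+n)|\mu - \lambda|, (2l+n)|\lambda|\bigr) F_k^{\mu-\lambda}(z)\, G_l^{\lambda}(z)\, |\mu-\lambda|^n |\lambda|^n\, d\lambda,
$$
with $F_k^\lambda = f_1^\lambda \ast_\lambda \varphi_k^\lambda$ and $G_l^\lambda = g_1^\lambda \ast_\lambda \varphi_l^\lambda$. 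I would then mimic the Cauchy--Schwarz with weights $|\mu-\lambda|^{-\delta},\, |\lambda|^{-\delta}$ from Theorem~\ref{infty}, placing the $g$-factor in $L^\infty_z$ via $\|G_l^\lambda\|_\infty \leq C|\lambda|^{-n/2} l^{(n-1)/2} \|g_1^\lambda\|_2$ (from $\|\varphi_l^\lambda\|_2 \leq C|\lambda|^{-n/2} l^{(n-1)/2}$) while keeping the $f$-factor in $L^2_z$. Summing the $k, l$ indices through the substitution that reduces the multiplier integral to $\iint |\varphi_j^\alpha(s, t)|^2 s^{n+\delta} t^{n+\delta}\, ds\, dt \sim 2^{-j(2\alpha+1)}$, I expect a bound of the form
$$
\|T_j^\alpha(f_1, g_1)\|_2 \leq C 2^{-j(\alpha + 1/2)} \biggl(\int \|f_1^\lambda\|_2^2\, |\lambda|^{-\delta} d\lambda\biggr)^{1/2} \biggl(\int \|g_1^\lambda\|_2^2\, |\lambda|^{-\delta} d\lambda\biggr)^{1/2}.
$$

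The gain over the $(\infty, \infty, \infty)$ case of Theorem~\ref{infty} comes entirely from the $f$-side: Plancherel in $t$ for $f \in L^2$ gives $\int \|f_1^\lambda\|_2^2\, d\lambda = 2\pi \|f_1\|_2^2 \leq 2\pi \|f\|_2^2$, with no $j$-growth, and splitting $|\lambda|^{-\delta}$ at $|\lambda| = 2^{-2j(1+\gamma)}$ (exactly as in Theorem~\ref{infty}) gives $\int \|f_1^\lambda\|_2^2 |\lambda|^{-\delta} d\lambda \leq C 2^{2j(1+\gamma)\delta}\|f\|_2^2$. The $g$-side is handled as in Theorem~\ref{infty}, yielding $\int \|g_1^\lambda\|_2^2 |\lambda|^{-\delta} d\lambda \leq C 2^{j(1+\gamma)(Q + 2\delta)} \|g\|_\infty^2$. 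Combining these and taking $L^2$-norms in $\xi$ as in Theorem~\ref{mainTh} gives
$$
\|T_j^1(f, g)\|_2 \leq C 2^{-j(\alpha + 1/2) + j(1+\gamma)(Q/2 + 2\delta)} \|f\|_2 \|g\|_\infty,
$$
and $\alpha > (Q-1)/2 = Q/2 - 1/2$ lets us pick $\gamma, \delta > 0$ small enough so that the exponent is negative. The main technical obstacle I foresee is executing the $L^2(dz)$ Cauchy--Schwarz cleanly: Theorem~\ref{infty} placed both factors in $L^\infty_z$ via the same $\|\cdot\|_2\|\varphi_k^\lambda\|_2$ trick, but here the $f$-factor must stay in $L^2_z$, so either a Young-type twisted-convolution bound $\|F_k^\lambda\|_2 \leq \|f^\lambda\|_2 \|\varphi_k^\lambda\|_1$ or an orthogonality argument for the special Hermite expansion must be chosen carefully to keep the $k$-sum summable while preserving the $2^{-j(\alpha + 1/2)}$ factor.
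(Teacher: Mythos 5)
Your proposal is correct and follows essentially the same route as the paper's proof: Plancherel in the $t$-variable exploiting the convolution structure in the $t$-frequency, orthogonality of the special Hermite projections (this, combined with Minkowski's integral inequality, is the option the paper takes --- not the Young-type bound via $\|\varphi_k^\lambda\|_1$, which would cost unwanted growth in $k$) to keep the $f$-factor in $L^2_z$, and the bound $\|g^{\lambda}\ast_{\lambda}\varphi_l^{\lambda}\|_\infty\leq\|g^{\lambda}\|_2\|\varphi_l^{\lambda}\|_2$ on the $g$-side. The only cosmetic difference is that the paper's computation produces $\|f\|_2^2$ directly from the Plancherel formula with no $|\lambda|^{-\delta}$ weight on the $f$-side, so your extra $2^{2j(1+\gamma)\delta}$ loss does not appear there; this is harmless for the threshold $\alpha>\frac{Q-1}{2}$.
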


\begin{proof}
\quad As above, it suffices to estimate $T_{j}^{1}(f_{1},g_{1})$. We write $
T_{j}^{\alpha }(f,g)$ as
\begin{eqnarray*}
T_{j}^{\alpha }(f,g)(z,t) &=&\int_{0}^{\infty }\int_{0}^{\infty }\varphi
_{j}^{\alpha }(\lambda _{1},\lambda _{2})\sum_{k=0}^{\infty
}(2k+n)^{-n-1}f\ast (\widetilde{e}_{k}^{\lambda _{1}}+\widetilde{e}
_{k}^{-\lambda _{1}})(z,t) \\
&&\times \sum_{l=0}^{\infty }(2l+n)^{-n-1}g\ast (\widetilde{e}_{l}^{\lambda
_{2}}+\widetilde{e}_{l}^{-\lambda _{2}})(z,t)\,d\mu (\lambda _{1})d\mu
(\lambda _{2}) \\
&=&C\int_{-\infty }^{\infty}\int_{-\infty }^{\infty } e^{-i(\lambda
_{1}+\lambda _{2})t}\sum_{k=0}^{\infty }\sum_{l=0}^{\infty }\varphi
_{j}^{\alpha }\left( (2k+n)\left\vert \lambda _{1}\right\vert
,(2l+n)\left\vert \lambda _{2}\right\vert \right) \\
&&\times \left( f^{\lambda _{1}}\ast _{\lambda _{1}}\varphi _{k}^{\lambda
_{1}}\right) (z)\left( g^{\lambda _{2}}\ast _{\lambda _{2}}\varphi
_{l}^{\lambda _{2}}\right) (z)\left\vert \lambda _{1}\right\vert
^{n}\left\vert \lambda _{2}\right\vert ^{n}\,d\lambda _{1}d\lambda _{2} \\
&=&C\int_{-\infty }^{\infty }e^{-i\lambda _{1}t}\int_{-\infty }^{\infty
}\sum_{k=0}^{\infty }\sum_{l=0}^{\infty }\varphi _{j}^{\alpha }\left(
(2k+n)\left\vert \lambda _{1}-\lambda _{2}\right\vert ,(2l+n)\left\vert
\lambda _{2}\right\vert \right) \\
&&\times \left( f^{\lambda _{1}-\lambda _{2}}\ast _{\lambda _{1}-\lambda
_{2}}\varphi _{k}^{\lambda _{1}-\lambda _{2}}\right) (z)\left( g^{\lambda
_{2}}\ast _{\lambda _{2}}\varphi _{l}^{\lambda _{2}}\right) (z)\left\vert
\lambda _{1}-\lambda _{2}\right\vert ^{n}\left\vert \lambda _{2}\right\vert
^{n}\,d\lambda _{1}d\lambda _{2}.
\end{eqnarray*}
Then, applying the Plancherel theorem in variable $t$, Mikowski's
inequality, the orthogonality of the special Hermite projections, (\ref{phi2}) and Plancherel formula (\ref{Plancherel}), we get that
\begin{eqnarray*}
&&\int_{\mathbb{C}^{n}}\int_{\mathbb{R}}\left\vert T_{j}^{\alpha
}(f,g)(z,t)\right\vert ^{2}\,dtdz \\
&=&C\int_{\mathbb{C}^{n}}\int_{\mathbb{R}}\bigg\vert \int_{\mathbb{R}
}e^{-i\lambda _{1}t}\int_{\mathbb{R}}\sum_{l=0}^{\infty }\sum_{k=0}^{\infty
}\varphi _{j}^{\alpha }\big( (2k+n)\left\vert \lambda _{1}-\lambda
_{2}\right\vert ,(2l+n)\left\vert \lambda _{2}\right\vert \big) \\
&& \times \left( f^{\lambda _{1}-\lambda _{2}}\ast _{\lambda _{1}-\lambda
_{2}}\varphi _{k}^{\lambda _{1}-\lambda _{2}}\right) (z)\left( g^{\lambda
_{2}}\ast _{\lambda _{2}}\varphi _{l}^{\lambda _{2}}\right) (z)\left\vert
\lambda _{1}-\lambda _{2}\right\vert ^{n}\left\vert \lambda _{2}\right\vert
^{n}\,d\lambda _{2}d\lambda _{1}\bigg\vert ^{2}\,dtdz \\
&=&C\int_{\mathbb{R}}\int_{\mathbb{C}^{n}}\bigg\vert \int_{\mathbb{R}
}\sum_{l=0}^{\infty }\sum_{k=0}^{\infty } \varphi _{j}^{\alpha }\big(
(2k+n)\left\vert \lambda _{1}-\lambda _{2}\right\vert ,(2l+n)\left\vert
\lambda _{2}\right\vert \big) \\
&& \times \left( f^{\lambda _{1}-\lambda _{2}}\ast _{\lambda _{1}-\lambda
_{2}}\varphi _{k}^{\lambda _{1}-\lambda _{2}}\right) (z)\left( g^{\lambda
_{2}}\ast _{\lambda _{2}}\varphi _{l}^{\lambda _{2}}\right) (z)\left\vert
\lambda _{1}-\lambda _{2}\right\vert ^{n}\left\vert \lambda _{2}\right\vert
^{n}d\lambda _{2}\bigg\vert ^{2}\,dzd\lambda _{1} \\
&\leq &C\int_{\mathbb{R}}\int_{\mathbb{C}^{n}}\Bigg( \int_{\mathbb{R}}
\sum_{l=0}^{\infty } \left\vert \sum_{k=0}^{\infty } \varphi _{j}^{\alpha }
\big( (2k+n)\left\vert \lambda _{1}-\lambda _{2}\right\vert
,(2l+n)\left\vert \lambda _{2}\right\vert \big) \left( f^{\lambda
_{1}-\lambda _{2}}\ast _{\lambda _{1}-\lambda _{2}}\varphi _{k}^{\lambda
_{1}-\lambda _{2}}\right) (z)\right\vert \\
&& \times \left\vert \left( g^{\lambda _{2}}\ast _{\lambda _{2}}\varphi
_{l}^{\lambda _{2}}\right) (z)\right\vert \left\vert \lambda _{1}-\lambda
_{2}\right\vert ^{n}\left\vert \lambda _{2}\right\vert ^{n}\,d\lambda _{2}
\Bigg) ^{2}\,dzd\lambda _{1} \\
&\leq &C\int_{\mathbb{R}}\Bigg( \int_{\mathbb{R}} \sum_{l=0}^{\infty }
\left\Vert \sum_{k=0}^{\infty }\varphi _{j}^{\alpha }\big( (2k+n)\left\vert
\lambda _{1}-\lambda _{2}\right\vert ,(2l+n)\left\vert \lambda
_{2}\right\vert \big) \left( f^{\lambda _{1}-\lambda _{2}}\ast _{\lambda
_{1}-\lambda _{2}}\varphi _{k}^{\lambda _{1}-\lambda _{2}}\right)
\right\Vert _{2} \\
&& \times \left\Vert g^{\lambda _{2}}\ast _{\lambda _{2}}\varphi
_{l}^{\lambda _{2}}\right\Vert _{\infty }\left\vert \lambda _{1}-\lambda
_{2}\right\vert ^{n}\left\vert \lambda _{2}\right\vert ^{n}\,d\lambda _{2}
\Bigg) ^{2}\,d\lambda _{1} \\
&\leq &C\int_{\mathbb{R}}\Bigg( \int_{\mathbb{R}} \sum_{l=0}^{\infty } \left(
\sum_{k=0}^{\infty }\left\vert \varphi _{j}^{\alpha }\big( (2k+n)\left\vert
\lambda _{1}-\lambda _{2}\right\vert ,(2l+n)\left\vert \lambda
_{2}\right\vert \big) \right\vert ^{2}\left\Vert f^{\lambda _{1}-\lambda
_{2}}\ast _{\lambda _{1}-\lambda _{2}}\varphi _{k}^{\lambda _{1}-\lambda
_{2}}\right\Vert _{2}^{2}\right) ^{\frac{1}{2}} \\
&& \times \left\Vert g^{\lambda _{2}}\right\Vert _{2}\left\Vert \varphi
_{l}^{\lambda _{2}}\right\Vert _{2}\left\vert \lambda _{1}-\lambda
_{2}\right\vert ^{n}\left\vert \lambda _{2}\right\vert ^{n}\,d\lambda _{2}
\Bigg) ^{2}\,d\lambda _{1} \\
&\leq &C \Bigg( \int_{\left\vert \lambda _{2}\right\vert \leq 1} \sum_{l\leq
\frac{1}{\left\vert \lambda _{2}\right\vert }} \left\Vert g^{\lambda _{2}}
\right\Vert ^2_{2} \left\Vert \varphi _{l}^{\lambda _{2}} \right\Vert ^2_{2}
\left\vert \lambda _{2}\right\vert ^{2n- \delta}\,d\lambda_{2} \Bigg) \\
&& \times \Bigg( \int_{\mathbb{R}} \int_{\mathbb{R}} \sum_{l=0}^{\infty }
\sum_{k=0}^{\infty }\left\vert \varphi _{j}^{\alpha }\big( (2k+n)\left\vert
\lambda _{1}-\lambda _{2}\right\vert ,(2l+n)\left\vert \lambda
_{2}\right\vert \big) \right\vert ^{2} \\
&& \times \left\Vert f^{\lambda _{1}-\lambda _{2}}\ast _{\lambda
_{1}-\lambda _{2}}\varphi _{k}^{\lambda _{1}-\lambda _{2}}\right\Vert
_{2}^{2} \left\vert \lambda _{1}-\lambda _{2}\right\vert ^{2n} \left\vert
\lambda _{2}\right\vert ^{\delta}\,d\lambda _{2} d\lambda _{1} \Bigg) \\
&= &C \Bigg( \int_{\left\vert \lambda _{2}\right\vert \leq 1} \sum_{l\leq
\frac{1}{\left\vert \lambda _{2}\right\vert }} \left\Vert g^{\lambda _{2}}
\right\Vert ^2_{2} \left\Vert \varphi _{l}^{\lambda _{2}} \right\Vert ^2_{2}
\left\vert \lambda _{2}\right\vert ^{2n- \delta}\,d\lambda_{2} \Bigg) \\
&& \times \Bigg( \sum_{k=0}^{\infty } \int_{\mathbb{R}} \sum_{l=0}^{\infty }
\int_{\mathbb{R}} \left\vert \varphi _{j}^{\alpha }\big( (2k+n)\left\vert
\lambda _{1}\right\vert ,(2l+n)\left\vert \lambda _{2}\right\vert \big)
\right\vert ^{2} \left\Vert f^{\lambda _{1}}\ast _{\lambda _{1}}\varphi
_{k}^{\lambda _{1}}\right\Vert _{2}^{2} \left\vert \lambda _{1}\right\vert
^{2n} \left\vert \lambda _{2}\right\vert ^{\delta}\,d\lambda _{2} d\lambda
_{1} \Bigg) \\
&\leq &C \Bigg( \int_{\left\vert \lambda _{2}\right\vert \leq 1} \left\Vert
g^{\lambda _{2}} \right\Vert ^2_{2} \left\vert \lambda _{2}\right\vert ^{n-
\delta} \bigg( \sum_{l\leq \frac{1}{\left\vert \lambda _{2}\right\vert }}
l^{n-1} \bigg)\,d\lambda_{2} \Bigg) \\
&& \times \Bigg( \sum_{k=0}^{\infty } \int_{\mathbb{R}} \left\Vert
f^{\lambda _{1}}\ast _{\lambda _{1}}\varphi _{k}^{\lambda _{1}}\right\Vert
_{2}^{2} \left\vert \lambda _{1}\right\vert ^{2n}\bigg( \sum_{l=0}^{\infty }
(2l+n)^{-1- \delta} \bigg) \int_{\mathbb{R}} \left\vert \varphi _{j}^{\alpha
}\big( (2k+n)\left\vert \lambda _{1}\right\vert , \left\vert \lambda
_{2}\right\vert \big) \right\vert ^{2} \left\vert \lambda _{2}\right\vert
^{\delta}\,d\lambda _{2} d\lambda _{1} \Bigg) \\
&\leq &C2^{-j(2\alpha +1)} \Bigg( \int_{\left\vert \lambda _{2}\right\vert
\leq 1} \left\Vert g^{\lambda _{2}} \right\Vert ^2_{2} \left\vert \lambda
_{2}\right\vert ^{- \delta}\,d\lambda_{2} \Bigg) \Bigg( \sum_{k=0}^{\infty }
\int_{\mathbb{R}} \left\Vert f^{\lambda _{1}}\ast _{\lambda _{1}}\varphi
_{k}^{\lambda _{1}}\right\Vert _{2}^{2} \left\vert \lambda _{1}\right\vert
^{2n}\, d\lambda _{1} \Bigg) \\
&\leq &C2^{-j(2\alpha +1)} \left\Vert f\right\Vert _{2}^{2} \int_{\left\vert
\lambda _{2}\right\vert \leq 1} \left\Vert g^{\lambda _{2}} \right\Vert
^2_{2} \left\vert \lambda _{2}\right\vert ^{- \delta}\,d\lambda_{2} .
\end{eqnarray*}
It follows from (\ref{m1}) that
\begin{eqnarray*}
&&\left\Vert T_{j}^{1}(f_{1},g_{1})\right\Vert ^2_{L^{2}(B_{j}(\xi ,\frac{1}{
4}))} \\
&=&\left\Vert T_{j}^{\alpha }(f_{1},g_{1})\right\Vert ^2_{L^{2} (B_{j}(\xi ,
\frac{1}{4}))} \\
&\leq &C2^{-j(2\alpha +1)} \left\Vert f_{1}\right\Vert _{2}^{2}
\int_{\left\vert \lambda _{2}\right\vert \leq 1} \left\Vert g_{1}^{\lambda
_{2}} \right\Vert ^2_{2} \left\vert \lambda _{2}\right\vert ^{-
\delta}\,d\lambda_{2} \\
&\leq &C2^{-j(2\alpha +1)} 2^{j(1+\gamma )\left(Q+2 \delta \right)}
\left\Vert f\right\Vert ^2_{L^{2}(B_{j}(\xi ,\frac{3}{4}))} \left\Vert g
\right\Vert ^2_{L^{\infty }(B_{j}(\xi ,\frac{3}{4}))}.
\end{eqnarray*}
Taking the $L^{2}$ norm with respect to $\xi $ yields that
\begin{equation*}
\left\Vert T_{j}^{1}(f_{1},g_{1})\right\Vert _{2}\leq C2^{-j(\alpha + \frac{1
}{2})} 2^{j(1+\gamma )(\frac{Q}{2}+ \delta)}\left\Vert f \right\Vert _{2}
\left\Vert g \right\Vert _{\infty }.
\end{equation*}
Thus, whenever $\alpha > \frac{Q-1}{2}$, we can choose $\gamma, \delta >0$
such that $\alpha > (1+\gamma )(\frac{Q}{2}+ \delta)- \frac{1}{2}$, which
yields that there exists $\varepsilon >0$ such that
\begin{equation*}
\left\Vert T_{j}^{1}(f_{1},g_{1})\right\Vert _{2}\leq C2^{-\varepsilon
j}\left\Vert f\right\Vert _{2}\left\Vert g\right\Vert _{\infty }.
\end{equation*}
The proof of Theorem \ref{Theorem212} is completed.
\end{proof}

\section*{Appendix: bilinear interpolation}

Because $p_{1}$ and $p_{2}$ are symmetric, we have obtained, in two sections
above, the boundedness of the bilinear Riesz means $S^{\alpha }$ at some
specific triples of points $(p_{1},p_{2},p)$ like
\begin{equation*}
(1,1,\frac{1}{2}), (2,2,1), (\infty,\infty,\infty), (1,2,\frac{2}{3}), (2,1,
\frac{2}{3}),(1,\infty ,1),(\infty,1,1),(2,\infty,2),(\infty,2,2)\text{.}
\end{equation*}
We can obtain the intermediate boundedness of $S^{\alpha}$ by using of the
bilinear interpolation via complex method adapted to the setting of analytic
families or real method in \cite{Graf}. Bernicot et al. \cite{Bern}
described how to make use of real method. The full results in Main Theorem
are proved in this way. We outline this argument for reader's convenience.

Consider a spherical decomposition of $S^{\alpha }$ as
\begin{equation*}
S^{\alpha }=\sum_{j=0}^{\infty }2^{-j\alpha }T_{j,\alpha },
\end{equation*}
where
\begin{equation*}
T_{j,\alpha }(f,g)=\int_{0}^{\infty }\int_{0}^{\infty }\varphi _{j,\alpha
}\left( \lambda _{1},\lambda _{2}\right) P_{\lambda _{1}}fP_{\lambda
_{2}}gd\mu (\lambda _{1})d\mu (\lambda _{2})
\end{equation*}
and
\begin{equation*}
\varphi _{j,\alpha }\left( s,t\right) =2^{j\alpha }(1-s-t)_{+}^{\alpha
}\varphi (2^{j}\left( 1-s-t\right) ).
\end{equation*}
In the preceding sections, we have actually obtained the estimates of the
form
\begin{equation}
\left\Vert T_{j,\alpha }\right\Vert _{L^{p_{1}}\times L^{p_{2}}\rightarrow
L^{p}}\leq C2^{j\alpha (p_{1},p_{2})}  \label{Tj}
\end{equation}
at some triples of points $(p_{1},p_{2},p)$. Since $\alpha (p_{1,}p_{2})$
only depends on the point $(p_{1,}p_{2},p)$, (\ref{Tj}) also holds for any
other $T_{j,\alpha ^{\prime }}$, i.e.,
\begin{equation*}
\left\Vert T_{j,\alpha ^{\prime }}\right\Vert _{L^{p_{1}}\times
L^{p_{2}}\rightarrow L^{p}}\leq C2^{j\alpha (p_{1},p_{2})}.
\end{equation*}
So, fixing $j$ and $\alpha ^{\prime }$ and applying bilinear real
interpolation theorem on $T_{j,\alpha ^{\prime }}$, we can conclude that if
the point $(p_{1},p_{2},p)$ satisfies
\begin{equation*}
\left( \frac{1}{p_{1}},\frac{1}{p_{2}},\frac{1}{p}\right) =(1-\theta )\left(
\frac{1}{p_{1}^{0}},\frac{1}{p_{2}^{0}},\frac{1}{p^{0}}\right) +\theta
\left( \frac{1}{p_{1}^{1}},\frac{1}{p_{2}^{1}},\frac{1}{p^{1}}\right)
\end{equation*}
for some $\theta \in (0,1)$ and $%
(p_{1}^{0},p_{2}^{0},p^{0}),(p_{1}^{1},p_{2}^{1},p^{1})$, \thinspace we have
that
\begin{equation*}
\left\Vert T_{j,\alpha ^{\prime }}\right\Vert _{L^{p_{1}}\times
L^{p_{2}}\rightarrow L^{p}}\leq C2^{j((1-\theta )\alpha
(p_{1}^{0},p_{2}^{0})+\theta \alpha (p_{1}^{1},p_{2}^{1}))}.
\end{equation*}
Define $\alpha (p_{1,}p_{2})=(1-\theta )\alpha (p_{1}^{0},p_{2}^{0})+\theta
\alpha (p_{1}^{1},p_{2}^{1})$ and let $\alpha ^{\prime }=\alpha $. It
follows that
\begin{equation*}
\left\Vert S^{\alpha }\right\Vert _{L^{p_{1}}\times L^{p_{2}}\rightarrow
L^{p}}\leq \sum_{j=0}^{\infty }2^{-j\alpha }\left\Vert T_{j,\alpha
}\right\Vert _{L^{p_{1}}\times L^{p_{2}}\rightarrow L^{p}}\leq
C\sum_{j=0}^{\infty }2^{-j\alpha }2^{j\alpha (p_{1,}p_{2})}.
\end{equation*}
Thus, when $\alpha >\alpha (p_{1,}p_{2})$, we have $\left\Vert S^{\alpha
}\right\Vert _{L^{p_{1}}\times L^{p_{2}}\rightarrow L^{p}}\leq C$, i.e., $
S^{\alpha }$ is bounded from $L^{p_{1}}(\mathbb{H}^{n})\times L^{p_{1}}(
\mathbb{H}^{n})$ to $L^{p}(\mathbb{H}^{n})$. \vskip 0.5 cm

\textbf{Acknowledgements} {\quad The first author is supported by National
Natural Science Foundation of China (Grant No. 11371036). The second author
is supported by China Scholarship Council (Grant No. 201606010026). }

\end{document}